\renewcommand\leq\leqslant
\renewcommand\geq\geqslant
\definecolor{darkblue}{rgb}{0.0, 0.0, 0.55}
\numberwithin{equation}{section}
\newtheorem{theorem}{Theorem}[section]
\newtheorem{corollary}[theorem]{Corollary}
\newtheorem{lemma}[theorem]{Lemma}
\newtheorem{proposition}[theorem]{Proposition}
\newtheorem{question}[theorem]{Question}
\theoremstyle{definition}
\newtheorem{definition}[theorem]{Definition}
\newtheorem{notation}[theorem]{Notation}
\theoremstyle{remark}
\newcommand{\N}{\mathbb{N}}
\DeclareMathOperator{\Alt}{Alt}
\DeclareMathOperator{\Aut}{Aut}
\DeclareMathOperator{\EFRF}{EFRF}
\DeclareMathOperator{\id}{id}
\DeclareMathOperator{\pr}{pr}
\DeclareMathOperator{\RiSt}{RiSt}
\DeclareMathOperator{\sgn}{sgn}
\DeclareMathOperator{\St}{St}
\DeclareMathOperator{\Sym}{Sym}
\newcommand{\abs}[1]{| #1 |}
\newcommand{\absH}[1]{\Vert #1 \Vert_H}
\newcommand{\bigAbsH}[1]{\bigl\Vert #1 \bigr\Vert_H}
\newcommand\Set[2]{\{\,#1\mid#2\,\}}
\newcommand{\T}{\mathcal{T}}
\newcommand{\defeq}\coloneqq
\renewcommand{\epsilon}{\varepsilon}
\title[A branch group with unsolvable conjugacy problem]{A branch group with unsolvable conjugacy problem}
\author[A. Bishop]{Alex Bishop}
\email{alexbishop1234@gmail.com}
\author[E. Schesler]{Eduard Schesler}
\address{Karlsruhe Institute of Technology, Englerstr.\ 2, 76131 Karlsruhe, Germany}
\email{eduardschesler@googlemail.com}
\subjclass[2010]{Primary 20F10; Secondary 20E08}
\keywords{word problem, conjugacy problem, branch group}
\begin{document}
\begin{abstract}
We prove that every finitely generated residually finite group $G$ can be embedded in a finitely generated branch group $\Gamma$ such that two elements in $G$ are conjugate in $G$ if and only if they are conjugate in $\Gamma$.
As an application we construct a finitely generated branch group with solvable word problem and unsolvable conjugacy problem and thereby answer a question of Bartholdi, Grigorchuk, and \v{S}uni\'{k}.
\end{abstract}
\maketitle

\section{Introduction}

\noindent Since their formulation by Dehn~\cite{Dehn1911} in 1911, the word problem, the conjugacy problem, and the isomorphism problem for groups remained to be the three fundamental decision problems in group theory.
Still, the question of decidability of these problems within a class of groups $\mathcal{C}$ is often among the first questions that arises in the study of $\mathcal{C}$.
Among the classes of groups whose algorithmic problems received particularly great interest are those that themselves are defined via algorithmic concepts.
One instance of this is the class of (finite state, synchronous) automata groups, see e.g.~\cite{BartholdiGrigorchukNekrashevych2003,BartholdiGrigorchukSunic03,GrigorchukNekrashevichSushchanskii00} for some background on these groups.
Although the word problem is well-known to be solvable for finitely generated automata groups, the picture regarding the solvability of the conjugacy problem in automata groups is far from being complete.
First steps in this direction were achieved by Wilson and Zalesskii~\cite{WilsonZalesskii1997} who showed that for odd primes $p$ the conjugacy problem is solvable in certain $p$-groups that were introduced by Grigorchuk~\cite{Grigorchuk1986} and Gupta and Sidki~\cite{GuptaSidki83}.
This result was complemented by Rozhkov~\cite{Rozhkov1998} and Leonov~\cite{Leonov1998} who showed that the conjugacy problem is solvable for Grigorchuk's $2$-group, respectively for all members of
the family of Grigorchuk groups $G_{\omega}$ from~\cite{Grigorchuk1985} whose word problem is solvable.
In~\cite{GrigorchukWilson2000} Grigorchuk and Wilson unified and extended the latter results to more general classes of branch groups, including most of the classical examples of automata groups.

Another milestone in the understanding of the solvability of the conjugacy problem in automata groups and branch groups was provided by I.\ Bondarenko, N.\ V.\ Bondarenko, Sidki, and Zapata~\cite{BondarenkoBondarenkoSidkiZapata2013}, who showed that the conjugacy problem is solvable in every finitely generated group of bounded automata.
On the other hand, \v Suni\'c and Ventura~\cite{SunicVentura2012} gave concrete examples of exponential activity automata groups with unsolvable conjugacy problem.
As the above results suggest, many of the most-studied branch groups arise as (bounded) automata groups.
However, unlike the class of automata groups, the class of finitely generated branch groups contains uncountably many isomorphism types so that there exist finitely generated branch groups with unsolvable word problem.
It is therefore a natural question, raised by Bartholdi, Grigorchuk, and \v{S}uni\'{k}~\cite[Question 5]{BGS-branch}, whether the word problem is the only obstruction for the solvability of the conjugacy problem in the class of finitely generated branch groups.
The following result provides a negative answer to this question.

\begin{restatable}{theoremx}{MainTheorem}\label{thm:main-existence-branch}
There exist finitely generated branch groups with solvable word problem and unsolvable conjugacy problem.
\end{restatable}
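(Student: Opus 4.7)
The plan is to deduce \cref{thm:main-existence-branch} from the embedding theorem advertised in the abstract: every finitely generated residually finite group $H$ admits a conjugacy-preserving embedding into some finitely generated branch group $\Gamma$. Once this technical statement is available, the proof of \cref{thm:main-existence-branch} is a short citation-plus-reduction argument.

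First, I would invoke a classical existence result to produce a finitely presented residually finite group $H$ with solvable word problem and unsolvable conjugacy problem; such groups are known to exist by work of Gorjaga and Kirkinskii (with further constructions by Collins, Miller, and others). Applying the embedding theorem to $H$ yields a finitely generated branch group $\Gamma \supseteq H$ such that, for every pair $g, h \in H$, $g$ and $h$ are conjugate in $H$ if and only if they are conjugate in $\Gamma$. If the conjugacy problem of $\Gamma$ were solvable, then running such an algorithm on pairs of elements of $H$ (which are words over a finite generating set of $\Gamma$) would decide conjugacy in $H$, contradicting the choice of $H$. Hence the conjugacy problem of $\Gamma$ is unsolvable.

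It remains to verify that $\Gamma$ has solvable word problem. Every finitely generated branch group is residually finite, since the level stabilisers in its faithful action on a spherically homogeneous rooted tree have finite index and intersect trivially. Provided the construction underlying the embedding theorem is effective enough to output $\Gamma$ as a recursively presented group on some explicit generating set from a finite presentation of $H$ (which one expects from any reasonable wreath-product-type construction), the classical theorem of McKinsey---that a finitely generated, recursively presented, residually finite group has solvable word problem---applies and completes the deduction. The main obstacle therefore lies not in this short argument but in establishing the embedding theorem itself: one must realise an arbitrary finitely generated residually finite $H$ as a subgroup of a branch group in such a way that no new conjugacies between elements of $H$ are introduced, despite the very rigid action-on-a-tree structure enjoyed by branch groups.
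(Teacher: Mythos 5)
Your overall reduction strategy is the same as the paper's: cite a finitely presented residually finite group with solvable word problem and unsolvable conjugacy problem (the paper uses Miller's construction, but Gorjaga--Kirkinskii is a fine alternative), apply the Frattini embedding theorem to obtain a branch group $\Gamma$, and propagate the unsolvability of the conjugacy problem along the Frattini embedding. That part is correct and faithful to the paper.

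The gap is in your argument that $\Gamma$ has solvable word problem. You invoke ``McKinsey's theorem'' in the form ``a finitely generated, recursively presented, residually finite group has solvable word problem,'' but this is not a theorem: the McKinsey/Mal'cev/Dyson argument requires \emph{finite} presentability. The two halves of that algorithm are (a) enumerating consequences of the relators to certify $w = 1$, and (b) enumerating maps from the generators to finite symmetric groups, checking that the relators are killed, and evaluating the image of $w$ to certify $w \neq 1$. Step (b) is where finite presentability is essential: to verify that an assignment $S \to \Sym(n)$ extends to a homomorphism of $G$, one must check that all relators vanish, and this is decidable only when there are finitely many of them (or, more precisely, when one has some additional effective access to the kernel). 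For a merely recursively presented residually finite group this check is a $\Pi_1$ condition, not decidable, and indeed there exist recursively presented residually finite groups with unsolvable word problem. Since the branch group $\Gamma$ produced here is not finitely presented, your appeal to McKinsey does not go through. The paper circumvents exactly this issue by working with the strictly stronger hypothesis that $G$ is $\EFRF^{+}$ (effectively residually finite with co-semi-decidable word problem), proving that this property passes to $\Gamma$ (\cref{prop:Gamma-eff-res-fin}), and then deducing solvability of the word problem from $\EFRF^{+}$ together with recursive presentability (\cref{cor:computable-wp}). This is not a cosmetic strengthening: Rauzy's result, quoted after \cref{thm:main-embedding-intro}, shows that there are finitely generated residually finite groups with solvable word problem that embed in \emph{no} finitely generated effectively residually finite group, so some hypothesis beyond ``recursively presented and residually finite'' is genuinely needed here.
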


As usual with results on the unsolvability of algorithmic problems, we will deduce Theorem~\ref{thm:main-existence-branch} from the unsolvability of a related problem.
In our case, this related problem is given by the conjugacy problem for finitely generated residually finite groups.
By a result of Miller~\cite[Theorem~9 on p.~31]{Miller1971} the latter problem is unsolvable even if we assume that the given residually finite group has solvable word problem.
In fact Miller constructs a finitely presented residually finite group $H$ with unsolvable conjugacy problem.
Recall that the relevance of finite presentability for the word problem of a residually finite group $G$ is that it enables us to decide whether a given assignment $f \colon S \rightarrow Q$ from a finite generating set $S$ of $G$ to a finite group $Q$ extends to a homomorphism $\widetilde{f} \colon G \rightarrow Q$.
This allows us to detect the non-triviality of an element $g \in G$ by enumerating all homomorphisms from $G$ to every symmetric group $\Sym(n)$ and applying them to $g$.
The latter procedure is a particular case of an algorithm that takes a word $w$ over the generators of $G$ as an input and, if $w \neq_G e$, produces a morphism $\varphi \colon G \rightarrow Q$ to a finite group $Q$ that satisfies $\varphi(w) \neq_Q e$.
Following~\cite{Rauzy2021}, we will say that a group $G$ is \emph{effectively residually finite} ($\EFRF$) if it admits such an algorithm.
For our purposes it will be useful to moreover assume that the algorithm does not terminate whenever the word $w$ represents the identity in $G$.
In this case, the group $G$ is referred to as an \emph{effectively residually finite group with co-semi-decidable word problem} ($\EFRF^{+}$), see Definition~\ref{def:eff-res-fin} for a more precise definition and~\cite{Rauzy21Computable,Rauzy2021} for more background on these properties.
If a finitely generated recursively presented group $G$ is $\EFRF^{+}$ it can be easily deduced that $G$ has a solvable word problem.

In order to take advantage of Miller's group $H$, we construct a finitely generated branch group $\Gamma$ and an embedding $\iota \colon H \rightarrow \Gamma$ for which two elements $g,h \in H$ are conjugate in $H$ if and only if $\iota(g)$ and $\iota(h)$ are conjugate in $\Gamma$.
Such an embedding is known as a Frattini embedding and guarantees that the unsolvability of the conjugacy problem in $H$ implies the unsolvability of the conjugacy problem in $\Gamma$.
Regarding this, we obtain Theorem~\ref{thm:main-existence-branch} as a consequence of the following result.

\begin{restatable}{theoremx}{EmbeddingTheorem}\label{thm:main-embedding-intro}
For every finitely generated residually finite group $G$, there exists a finitely generated branch group $\Gamma$ and a Frattini embedding $\iota \colon G \rightarrow \Gamma$.
Moreover, if $G$ is recursively presented and $\EFRF^{+}$, then $\Gamma$ can be chosen to be recursively presented and $\EFRF^{+}$ as well.
\end{restatable}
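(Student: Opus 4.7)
The plan is to construct $\Gamma$ as a subgroup of $\Aut(T)$ acting faithfully on a spherically homogeneous rooted tree $T$, built by interlacing the residually finite action of $G$ on a coset tree with the action of a reference finitely generated branch group $B$ (for instance the first Grigorchuk group). First I would fix a descending chain $G = N_0 \rhd N_1 \rhd \cdots$ of finite-index normal subgroups with $\bigcap_i N_i = \{1\}$, which yields a faithful action of $G$ on the rooted tree $T_G$ whose $n$-th level is the coset space $G/N_n$. I would then build $T$ by enlarging $T_G$ so that each vertex $v \in T_G$ carries a copy $T_v$ of the tree on which $B$ acts; the tree $T$ thus decomposes into a $G$-skeleton and $B$-decorations, and $\Gamma$ is the group generated by $\iota(G)$ acting on the skeleton together with one set of generators for $B$ acting on a single decoration.

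To verify that $\Gamma$ is branch, I would show that for every vertex $v$, the rigid stabiliser $\Rs_\Gamma(v)$ contains the $B$-decorations placed at and below $v$ (realised as $\iota(G)$-conjugates of the initial $B$-copy), so that the branch property of $B$ transfers: the rigid stabilisers have finite index in the level stabilisers of $\Gamma$ at every level. Finite generation should follow from a recursive, self-similar choice of $B$-generators that act simultaneously across all decorations, in the spirit of the classical automata-group constructions.

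The Frattini property is the structural heart of the argument. I would arrange the construction so that it fits into a split short exact sequence $1 \to K \to \Gamma \xrightarrow{\pi} G \to 1$, where $\pi$ records the action of $\Gamma$ on the $G$-skeleton (forgetting the decorations) and $K$ is normally generated by the $B$-copy. By construction $\pi \circ \iota = \id_G$, so if $\gamma \iota(g) \gamma^{-1} = \iota(h)$ in $\Gamma$ for some $g, h \in G$ and $\gamma \in \Gamma$, then $\pi(\gamma) \cdot g \cdot \pi(\gamma)^{-1} = h$ in $G$; the reverse direction is immediate from $\iota$ being a homomorphism, so $\iota$ is Frattini.

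For the effectiveness upgrade, I would observe that the action of $\Gamma$ on any finite truncation $T^{(n)}$ is algorithmically computable from the actions of $G$ on $T_G^{(n)}$ and of $B$ on $T_B^{(n)}$. Given $\EFRF^{+}$ algorithms for $G$ and for $B$, one can combine them into an algorithm that, on a non-trivial input word in the generators of $\Gamma$, eventually produces a finite quotient of $\Gamma$ separating the word from the identity while looping forever on words representing the identity. The main technical obstacle, I expect, is reconciling the three simultaneous demands on $\Gamma$: finite generation, the branch property, and compatibility with the retraction $\pi$ onto $G$. All three constrain how the decorations may be attached and how the generators act across levels, and meeting them at once likely forces a carefully chosen self-similar attachment scheme where a single finite set of generators for $B$ can be recycled throughout the tree.
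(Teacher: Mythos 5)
Your Frattini argument does not work, and the obstruction is fundamental. You propose to build a split short exact sequence $1 \to K \to \Gamma \xrightarrow{\pi} G \to 1$ with $\pi \circ \iota = \id_G$ and to deduce the Frattini property by pushing conjugation equations forward through $\pi$. But a basic structural theorem about branch groups (Grigorchuk; see \cite{BGS-branch}) says that every non-trivial normal subgroup $N$ of a branch group $\Gamma$ contains $\RiSt_\Gamma(n)'$ for some $n$, and since $\RiSt_\Gamma(n)$ has finite index, this forces every \emph{proper} quotient of $\Gamma$ to be virtually abelian. In your setup, if $K \neq 1$ then $G \cong \Gamma/K$ is a proper quotient of a branch group and hence virtually abelian — but the whole point of the application is to take $G$ to be Miller's group, or more generally a group with unsolvable conjugacy problem, which can never be virtually abelian. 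If instead $K = 1$, then $\Gamma \cong G$ and you have built nothing. So the retraction $\pi$ you rely on cannot exist in any interesting case, and the Frattini property cannot be obtained this way.

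The paper avoids a retraction entirely. Its embedding $g \mapsto \widetilde{g}^{[\ell]}$ sends $g$ to an automorphism supported along a distinguished ray of the tree, with one section that recurses ($\widetilde{g}^{[\ell+1]}$ at $x_{\ell+1}$) and two finitary sections that carry the $G$-action and an auxiliary alternating action. The Frattini property (Theorem 5.3) is then proved by induction on a complexity function $\absH{\gamma}$ that counts $H$-syllables in a word over $H \ast B_\ell$: the key technical input is a contraction lemma (Lemma 5.2) showing that passing to sections at least halves $\absH{\cdot}$ and leaves behind a fragmented subword, so a hypothetical conjugator of minimal complexity leads to a contradiction after finitely many descents. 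This inductive, length-contraction argument is the genuine replacement for the retraction you were hoping for, and there is no way to shortcut it by a quotient map onto $G$. Your remarks about decorating the coset tree and combining effectiveness algorithms are reasonable in spirit and similar in flavour to what the paper does (the alphabets $X_n$ are $G/N_n$ enlarged by a handful of auxiliary letters), but the structural heart of the proof is the contraction induction, not a splitting.

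Two secondary gaps worth noting. First, you do not explain how to keep $\Gamma$ finitely generated while also making the rigid stabilizers large; the paper handles this by generating $\Gamma_\ell$ from the single finite set $\widetilde{H}^{[\ell]} \cup B_\ell$ and then proving a wreath recursion $\Gamma_\ell \cong \Gamma_{\ell+1} \wr_{X_{\ell+1}} B_\ell$ via a commutator trick (Lemma 4.4). Second, the effectiveness step is not merely running two $\EFRF^+$ algorithms in parallel: the paper needs a quantitative statement (Lemma 6.1) that a word of length $\ell$ over the generators of $\Gamma_0$ is trivial if and only if it lies in $\St_{\Gamma_0}(2\ell)$, which again comes from the contraction lemma together with the requirement that non-trivial elements of $N_n$ have word length at least $n+1$. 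Without some such quantitative control you cannot decide when to stop searching for a separating finite quotient, so the "combine the two algorithms" step would not by itself yield an $\EFRF^+$ certificate for $\Gamma$.
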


\noindent As pointed out by Rauzy~\cite[Theorem 2]{Rauzy2021}, there exist finitely generated residually finite groups with solvable word problem that do not embed in finitely generated effectively residually finite groups.
In particular, this tells us that the assumption on $G$ in Theorem~\ref{thm:main-embedding-intro} to be $\EFRF^{+}$ cannot be relaxed to the assumption of having a solvable word problem while keeping the effective residual finiteness for $\Gamma$.
However, to the best of our knowledge, it could still be true that the following question has an affirmative answer.

\begin{question}\label{quest:BH-for-branch}
Does every finitely generated residually finite group $G$ with solvable word problem embed in a finitely generated branch group with solvable word problem?
\end{question}

\noindent As a byproduct of Theorem~\ref{thm:main-embedding-intro}, we note the following direct consequence for the Boone--Higman conjecture, which asserts that a finitely generated group $G$ has solvable word problem if and only if $G$ embeds into a finitely presented simple group.

\begin{corollary}\label{cor:boone-higman}
If the Boone--Higman conjecture is true for the class of branch groups with $\EFRF^{+}$, then it is true for every group with $\EFRF^{+}$.
\end{corollary}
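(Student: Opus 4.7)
My strategy is to reduce the Boone--Higman conjecture for $\EFRF^{+}$ groups to its assumed form for branch groups with $\EFRF^{+}$ by feeding a given group through \cref{thm:main-embedding-intro}.

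The Boone--Higman conjecture is a biconditional, so I first dispense with the easy direction. It holds unconditionally on $G$: every finitely presented simple group $S$ has solvable word problem by Kuznetsov's theorem, and the word problem in any finitely generated subgroup $G \leq S$ reduces to the word problem of $S$ (write each generator of $G$ as a word in the generators of $S$), so $G$ has solvable word problem as well. This reasoning needs no hypothesis on $G$, so only the forward direction remains.

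For the forward direction, let $G$ be a finitely generated $\EFRF^{+}$ group with solvable word problem. Because the word problem is solvable, $G$ is recursively presented; and since $\EFRF^{+}$ in particular implies residual finiteness, the moreover-part of \cref{thm:main-embedding-intro} applies, producing a finitely generated branch group $\Gamma$ that is recursively presented and $\EFRF^{+}$, together with a Frattini embedding $\iota \colon G \hookrightarrow \Gamma$. As noted in the paragraph introducing $\EFRF^{+}$ in the introduction, any finitely generated recursively presented $\EFRF^{+}$ group has solvable word problem, so $\Gamma$ does. Hence $\Gamma$ lies in the class for which Boone--Higman is hypothesized to hold, and we obtain an embedding $\Gamma \hookrightarrow S$ into a finitely presented simple group $S$. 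Composing with $\iota$ yields the required embedding of $G$ into a finitely presented simple group.

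No serious obstacle arises here: \cref{thm:main-embedding-intro} does essentially all the work, and the only point requiring care is to check that the effectiveness hypotheses ($\EFRF^{+}$, recursive presentability) are preserved at each step so that $\Gamma$ belongs to the class to which Boone--Higman is assumed to apply. This preservation is precisely the content of the second sentence of \cref{thm:main-embedding-intro}.
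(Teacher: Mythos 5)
Your proof is correct and is essentially the argument the paper has in mind: it calls Corollary~\ref{cor:boone-higman} a ``direct consequence'' of Theorem~\ref{thm:main-embedding-intro} and gives no written proof, precisely because the reasoning is the one you spell out — dispense with the (unconditional) backward direction via Kuznetsov, reduce the forward direction to the branch-group case via the Frattini embedding of Theorem~\ref{thm:main-embedding-intro}, and observe (as in Corollary~\ref{cor:computable-wp}) that the $\EFRF^{+}$ and recursive-presentability hypotheses carry over so that $\Gamma$ sits in the class to which the hypothesis is applied. No gaps.
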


\noindent In view of Corollary~\ref{cor:boone-higman} it seems interesting and intriguing that it recently became a successful strategy to apply variations of typical properties of branch groups, such as self-similarity, in order to construct finitely presented simple groups that contain groups with those properties, see e.g.~\cite{BelkBleakMatucciZaremsky2023,BelkMatucci2024,Zaremsky2025}.

\subsection*{Notation}
In this paper, we write $\N = \{1,2,3,\ldots\}$ for the natural number starting at 1, and we write $\N_0=\{0,1,2,3,\ldots\}$ for the natural numbers including 0.
Suppose $g,h\in G$ are two elements of a group $G$, then we define the conjugate $g^h \coloneqq h^{-1}gh$.

\subsection*{Acknowledgements}
The authors are grateful to Claudio Llosa Isenrich, Roman Sauer, and Emmanuel Rauzy for a number of helpful discussions.
The first author acknowledges support from Swiss NSF grant 200020-200400,
and support from the University of Technology Sydney by an Honorary Appointment as Visiting Fellow.

\section{Effectively residually finite groups}

Let $G$ be a finitely generated, recursively presented, residually finite group and let $S$ be a finite symmetric generating set of $G$.
Given a word $w \in S^*$, we write $\overline{w} \in G$ to denote the element of $G$ that is represented by $w$.
For each word $w \in S^*$ with $\overline{w} \neq 1$, let $f_w \colon G \to Q_w$ be a homomorphism onto a finite group $Q_w$ that satisfies $f_w(\overline w) \neq 1$.
The set of words of length at most $n \in \N$ over $S$ that represent a non-trivial element in $G$ will be denoted by
\begin{equation}\label{eq:word-set-W_n}
    W_n \coloneqq \{ w\in S^* \mid \overline w \neq 1,\ |w|_S\leq n \}.
\end{equation}
For each $n\in \N$, we define the homomorphism
\[
    f_n' \colon G \rightarrow \prod \limits_{w \in W_n} Q_w,\ \ 
    g \mapsto (f_w(g))_{w \in W_n},
\]
whose image we denote by $Q_n$.
By corestricting $f_n'$, we obtain the surjective homomorphism
\begin{equation}\label{eq:varphi_n}
f_n \colon G \to Q_n.
\end{equation}
Since $G$ is residually finite, the sequence $(N_n)_{n \in \N}$ that consists of the kernels $N_n \defeq \ker(f_n)$ is a residual chain in $G$, i.e.\ it satisfies $\bigcap \limits_{n \in \N} N_n = 1$.

It is a direct consequence of the definition of $f_n$ in (\ref{eq:varphi_n}) that the word length of every non-trivial element in $N_n$ is bounded from below by $n+1$.
In what follows we work with the assumption that there is a Turing machine that takes $n \in \N$ as an input and returns the map $f_n$.
The existence of such a Turing machine can be guaranteed (see~\cref{lem:computable-homos}) by combining our assumption that $G$ is recursively presented with the following definition, which was introduced by Rauzy~\cite{Rauzy21Computable,Rauzy2021}.

\begin{definition}\label{def:eff-res-fin}
Let $G$ be the group with finite generating set $S$ as chosen at the beginning this section, then $G$ is \emph{effectively residually finite group with co-semi-decidable word problem} ($\EFRF^{+}$) if there exists a Turing machine that takes as input a word $w\in S^*$, and proceeds as follows:
\begin{enumerate}
    \item[$(\mathrm{E1})$] if $\overline w \neq 1$, then the machine outputs a homomorphism $f_w\colon G\to Q_w$, with the properties described in the previous paragraph;
    \item[$(\mathrm{E2})$] otherwise, $\overline w = 1$ and the Turing machine does not halt.
\end{enumerate}
If we relax these conditions by not specifying the output of the Turing machine for $\overline w = 1$, then $G$ is said to be an \emph{effectively residually finite group} ($\EFRF$).
\end{definition}

\subsection{Output format}\label{sec:output-formats}
In \cref{def:eff-res-fin}, it makes sense to speak of a Turing machine which outputs $f_w\colon G\to Q_w$ as such a homomorphism can be completely described by a finite amount of data.
In particular, we need only output the values $f_w(s) \in Q_w$ for each generator $s\in S$, where each such element of $Q$ can be described as an element of $\Sym(|Q_w|)$, for some fixed enumeration of $Q_w$.

\subsection{Computable sequences of normal subgroups}
From the definition of $\EFRF^{+}$ in \cref{def:eff-res-fin}, we may prove the following result which is used in the proof of our main theorems.

\begin{lemma}\label{lem:computable-homos}
Let $G$ be a finitely generated group with finite symmetric generating set $S$.
If $G$ is $\EFRF^{+}$ and recursively presentable, then there exists a Turing machine that takes as input a number $n \in \N$, and returns as output a homomorphism $f_n \colon G \to Q_n$ as described in (\ref{eq:varphi_n}).
\end{lemma}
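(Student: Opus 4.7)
The plan is to produce the homomorphism $f_n$ by mimicking its definition in~(\ref{eq:varphi_n}), so I need to effectively compute the finite set $W_n$, then the finite product $\prod_{w \in W_n} Q_w$ together with the map $f_n'$, and finally corestrict to its image.

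First I would show that the hypotheses imply a decidable word problem for $G$: since $G$ is recursively presented, the set of words over $S$ representing the identity is recursively enumerable (enumerate all finite products of conjugates of defining relators and their inverses, and reduce), while the $\EFRF^{+}$ machine halts on input $w$ precisely when $\overline{w} \neq 1$ by condition~$(\mathrm{E2})$. Dovetailing these two semi-decision procedures yields an algorithm that, on input $w \in S^*$, decides whether $\overline{w} = 1$. Enumerating all words in $S^*$ of length at most $n$ and applying this decision procedure then produces the finite set $W_n$.

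Next I would compute, for each $w \in W_n$, the homomorphism $f_w \colon G \to Q_w$ by running the $\EFRF^{+}$ Turing machine on $w$; by the output format discussed in~\cref{sec:output-formats}, this returns an explicit description of the finite group $Q_w$ together with the list $(f_w(s))_{s \in S}$. Assembling these componentwise gives $f_n'(s) = (f_w(s))_{w \in W_n}$ as an element of the (explicitly constructible) finite group $\prod_{w \in W_n} Q_w$. To compute the image $Q_n$, I would perform a standard breadth-first enumeration of the subgroup of $\prod_{w \in W_n} Q_w$ generated by $\{f_n'(s) : s \in S\}$: since the ambient group is finite, this terminates and outputs a list of the elements of $Q_n$ together with its multiplication table. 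Fixing the resulting enumeration of $Q_n$, I would then express each $f_n(s)$ as the corresponding permutation in $\Sym(\lvert Q_n \rvert)$, which is precisely the required output format.

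The only nontrivial step in this plan is the first one: that recursive presentability together with the halting behaviour of the $\EFRF^{+}$ machine suffices to decide the word problem, and hence to identify $W_n$ algorithmically. Once $W_n$ and the individual maps $f_w$ are in hand, everything else is a routine finite computation inside an explicitly constructed finite group.
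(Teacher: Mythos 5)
Your proposal is correct and follows essentially the same route as the paper: both observe that recursive presentability gives a semi-decision procedure for $\overline{w}=1$, that the $\EFRF^{+}$ machine gives a semi-decision procedure for $\overline{w}\neq 1$, and that running the two in parallel decides the word problem and hence identifies $W_n$, after which $f_n$ is assembled from the outputs $f_w$. You spell out the finite bookkeeping (computing the image $Q_n$ by closure in the finite ambient product and fixing an enumeration) a bit more explicitly than the paper does, but there is no substantive difference.
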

\begin{proof}
Since $G$ is recursively presented, there exists a Turing machine that takes as input a word $w\in S^*$ and terminates if and only if $\overline w = 1$.
By running this machine in parallel with the machine in Definition~\ref{def:eff-res-fin}, we obtain a Turing machine $M$ that takes as input a word $w\in S^*$ and proceeds as follows:
\begin{itemize}
    \item if $\overline w \neq 1$, then the procedure returns a homomorphism $f_w \colon G \to Q_w$ onto a finite group $Q_w$ with $f_w(\overline{w}) \neq 1$,
    \item if $\overline w = 1$, then the procedure terminates and outputs `\verb!identity!'
\end{itemize}
Notice here that we intend that the output `\verb!identity!' cannot be confused with the specification of a homomorphism.

Equipped with such a Turing machine $M$, we can build another one that takes as input a number $n \in \N$, enumerates the words of length at most $n$ over $S$, applies $M$ to each of them, and returns the corestriction $f_n$ of the homomorphism
\[
    f_n' \colon G \rightarrow \prod \limits_{w \in W_n} Q_w,\ \ g \mapsto (f_w(g))_{w \in W_n}
\]
onto its image.
By construction, this newly constructed Turing machine fulfills the requirements of the Turing machine from the lemma.
\end{proof}

\begin{corollary}\label{cor:computable-wp}
Let $G$ be a finitely generated group with finite symmetric generating set $S$.
If $G$ is $\EFRF^{+}$ and recursively presentable, then the word problem is solvable for $G$.
\end{corollary}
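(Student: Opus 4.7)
The plan is to exhibit the word problem as the intersection of a recursively enumerable set and a co-recursively enumerable set, and then run the two semi-decision procedures in parallel.

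First I would observe that, since $G$ is recursively presented, there is a Turing machine $M_1$ that on input $w \in S^{*}$ halts if and only if $\overline{w} = 1$; this semi-decides the set $\{w \in S^{*} : \overline{w} = 1\}$. Next, the hypothesis that $G$ is $\EFRF^{+}$ supplies, by property (E1)--(E2) of \cref{def:eff-res-fin}, a Turing machine $M_2$ that on input $w$ halts (producing a homomorphism $f_w$) if and only if $\overline{w} \neq 1$; so $M_2$ semi-decides $\{w \in S^{*} : \overline{w} \neq 1\}$.

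I would then construct a decision procedure as follows: on input $w \in S^{*}$, simulate $M_1$ and $M_2$ in parallel (alternating a step of each). Exactly one of the two machines eventually halts, because the dichotomy $\overline{w} = 1$ or $\overline{w} \neq 1$ is exhaustive and each machine halts in precisely one of the two cases. If $M_1$ halts first, output that $\overline{w} = 1$; if $M_2$ halts first, output that $\overline{w} \neq 1$. This algorithm terminates on every input and correctly decides the word problem.

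There is no real obstacle here; the only thing to be careful about is the standard RE $\cap$ coRE $=$ decidable argument, namely that the parallel simulation is well-defined and that exactly one branch halts. Both points are immediate from the specifications of $M_1$ (recursive presentability) and $M_2$ (\cref{def:eff-res-fin}), so the proof is essentially a two-line observation.
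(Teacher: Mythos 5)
Your proof is correct, and it is a genuinely more direct route than the one the paper takes. You invoke the classical fact that a set which is both recursively enumerable (here: $\{w : \overline{w}=1\}$, semi-decided by recursive presentability) and co-recursively enumerable (here: $\{w : \overline{w}=1\}$ has complement semi-decided by the $\EFRF^{+}$ machine via (E1)--(E2)) is decidable, running the two semi-deciders in parallel. The paper instead first builds, in \cref{lem:computable-homos}, a computable sequence of finite quotients $f_n\colon G\to Q_n$ whose kernels $N_n$ contain no non-trivial element of word length $\leq n$, and then decides $\overline{w}=1$ for $|w|=n$ by computing $f_n(\overline w)$ in the finite group $Q_n$. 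That detour is not wasted effort: the computable sequence $(f_n)$ is the object the paper actually needs later (see \cref{lem:computable-chain-with-Y} and the construction of $\Gamma$), so \cref{cor:computable-wp} falls out of it essentially for free. Your argument, by contrast, proves only the corollary but does so with less machinery and makes the logical structure (RE $\cap$ coRE $=$ decidable) transparent; it is also worth noting that the paper's proof of \cref{lem:computable-homos} itself quietly uses the same parallel-simulation trick you use, so the two proofs are closer at the root than they appear on the surface.
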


\begin{proof}
Suppose we are given a word $w=s_1s_2\cdots s_n\in S^*$.
We may then use the Turing machine in \cref{lem:computable-homos} to find the homomorphism $f_n\colon G\to Q_n$ as in (\ref{eq:varphi_n}).
We then see that $f_n(\overline{w}) = f_n(\overline{s_1})f_n(\overline{s_2})\cdots f_n(\overline{s_n}) = 1$ if and only if $\overline{w}=1$ in $G$.
Thus, using $f_n\colon G \to Q_n$, we may decide the word problem for $w$.
\end{proof}

\section{Groups acting on rooted trees}

\noindent Our objective in this section is to develop notation and basic results concerning groups that act on rooted trees, which we use in \cref{sec:constructing-branch-groups} to construct the group $\Gamma$ in \cref{thm:main-embedding-intro}.
For more background on these groups, and in particular for background on branch groups, we refer the reader to~\cite{BGS-branch}.

\subsection{Rooted trees}\label{subsec:rooted-trees}

Let $X = (X_n)_{n \in \N}$ be a sequence of finite sets, each with cardinality $\abs{X_n} \geq 2$, which we will think of as alphabets.
By taking finite products over those alphabets we obtain the sets of words $X^{\ell} \defeq X_1 \times X_2 \times \cdots \times X_{\ell}$ of length $\ell$ for each $\ell \in \N_0$, where $X^{0} = \{\emptyset\}$.
The \emph{spherically homogeneous rooted tree associated to $X$}, denoted by $\T_X$, is the rooted tree with vertex set $X^{\ast} = \bigcup
_{\ell=0}^{\infty} X^{\ell}$ and root vertex $\emptyset$, where two vertices $v,w$ are connected by an edge if and only if there is a letter $x \in X_n$ for some $n \in \N$ such that either $v = wx$ or $w = vx$.
Let $\Aut(\T_X)$ denote the group of all automorphisms of $\T_X$ that fix the root. 
Note that the latter condition implies that the distance of a vertex $v$ to the root, which we call the \emph{level} of $v$, is preserved under the action of $\Aut(\T_X)$ and coincides with the word length $\abs{v}$ of $v$.
Thus for every subgroup $G \leq \Aut(\T_X)$ and every $\ell \in \N_0$, we have a natural homomorphism $\pi_{\ell} \colon G \rightarrow \Sym(X^{\ell})$.
On the other hand, every permutation $\sigma \in \Sym(X_{1})$ gives rise to an automorphism $\widehat{\sigma}$ of $\T_X$ by setting $\widehat{\sigma}(xw) = \sigma(x)w$ for every $x \in X_1$ and every $w \in X_2 \times X_3 \times \cdots \times X_{k}$ with $k \geq 2$.
Automorphisms obtained in this way are called \emph{rooted}.
More generally, we call a tree autormorphism $\alpha \in \Aut(T_X)$ \emph{finitary} if there is some $n \in \N_0$ and a permutation $\sigma \in \Sym(X^n)$, such that $\alpha(vw) = \sigma(v)w$ for every $v \in X^n$ and every $w \in X_{n+1} \times \ldots \times X_m$ with $m > n$.
In this case we will write $\alpha = \widehat{\sigma}$.
A natural counterpart to finitary automophisms are those automorphisms of $\T_X$ that are supported on a subtree of $\T_X$ whose vertex set consists of those words in $X^{\ast}$ that start with a given word $v \in X^{\ast}$.
Note that 
this
subtree is canonically isomorphic to the 
rooted tree $\T_X^{[\ell]}$ that is associated to the subsequence $(X_{\ell+1}, X_{\ell+2}, \ldots)$ of $X$, where $\ell = |v|$.
Using the tree $\T_X^{[\ell]}$, we can now introduce the following notation for the subtree of $\T_X$ that we have just described.
\begin{notation}\label{not:vTXl}
For each $\ell \in \N_0$ and each $v \in X^{\ell}$ we write $v\T_X^{[\ell]}$ to denote the subtree of $\T_X$ whose vertex set consists of those words in $X^{\ast}$ that start with $v$.
\end{notation}

Equipped these trees, we can let the direct sum $\bigoplus 
_{v \in X^{\ell}} \Aut(\T_X^{[\ell]})$ act on $\T_X$ via $(\alpha_v)_{v}(uw) = u \alpha_{u}(w)$ for every $u \in X^{\ell}$ and $w \in \T_X^{[\ell]}$.
By combining the latter type of tree automorphism with the finitary ones, we can decompose each tree automorphism $\alpha \in \Aut(\T_X)$ by writing
\[
\alpha = \widehat{\pi_{\ell}(\alpha)} \cdot (\alpha_v)_{v \in X^{\ell}},
\]
where $\alpha_v \in \Aut(\T_X^{[\ell]})$ is the unique automorphism that satisfies 
\[
\alpha(vw) = \alpha(v) \alpha_v(w)
\]
for every $w \in \T_X^{[\ell]}$.
In fact, for each $\ell\in \mathbb N_0$, it can be easily seen that the map
\begin{equation}\label{eq:wreath-iso}
  \Aut(\T_X) \rightarrow \widehat{\pi_{\ell}(\Aut(\T_X))} \ltimes \bigoplus \limits_{v \in X^{\ell}} \Aut(\T_X^{[\ell]}),\ 
  \alpha \mapsto \widehat{\pi_{\ell}(\alpha)} \cdot (\alpha_v)_{v \in X^{\ell}}
\end{equation}
defines an isomorphism, which is referred to as the \emph{permutational wreath decomposition of level $\ell$} of $\Aut(\T_X)$.
For each $v \in X^{\ell}$, the automorphism $\alpha_v$ in~\eqref{eq:wreath-iso} is called the \emph{section} of $\alpha$ at $v$.
Note that the section of the product $\alpha \beta$ of two automorphisms $\alpha,\beta \in \Aut(\T_X)$ at $v$ is given by the formula
\begin{equation}\label{eq:section-formula}
(\alpha \beta)_v = \alpha_{\beta(v)} \beta_v,
\end{equation}
which we will extensively apply in what follows.
The following notation is quite useful for working with sections.

\begin{notation}\label{not:v-gamma}
Let $\ell,d \in \N_0$.
Given a vertex $w \in \T_X^{[\ell]}$ of level $d$ and an element $\gamma \in \Aut(\T_X^{[\ell+d]})$, we write $w\gamma \in \Aut(\T_X^{[\ell]})$ to denote the unique element with support in $w\T_X^{[\ell+d]}$ that is given by $w\gamma(wv) = w\gamma(v)$ for every $v \in \T_X^{[\ell+d]}$.
More generally, we write $wH \defeq \Set{wh}{h \in H} \subseteq \Aut(\T_X^{[\ell]})$ for every subset $H \subseteq \Aut(\T_X^{[\ell+d]})$.
\end{notation}

\subsection{Branch groups}\label{subsec:branch-groups} Let us now fix a subgroup $G$ of $\Aut(\T_X)$.
We say that $G$ acts \emph{spherically transitively} on $\T_X$ if the action of $G$ on $X^{\ell}$ via $\pi_{\ell}$ is transitive for every $\ell \in \N_0$.
The \emph{level $\ell$ stabilizer subgroup} in $G$ is defined by
\[
\St_G(\ell) \defeq \bigcap \limits_{v \in X^{\ell}} \St_G(v),
\]
where $\St_G(v)$ denotes the stabilizer of $v$ in $G$.
The \emph{rigid stabilizer} of $v \in X^{\ell}$ in $G$ is the subgroup $\RiSt_G(v)$ of $\St_G(v)$ that consists of those elements in $G$ that fix all vertices in $\T_X$ that are not contained in $v\,\T_{X}^{[\ell]}$.
The subgroup of $\St_G(\ell)$ that is generated by the groups $\RiSt_G(v)$ with $v \in X^{\ell}$ is called the \emph{rigid level $\ell$ stabilizer subgroup} in $G$. We denote this subgroup as $\RiSt_G(\ell)$.
If $G$ acts spherically transitively on $\T_X$, and each $\RiSt_G(\ell)$ is a finite index subgroup of $G$, then $G$ is said to be a \emph{branch subgroup} of $\Aut(\T_X)$.
A group is called a \emph{branch group} if it is isomorphic to a branch subgroup of the automorphism group of some spherically homogeneous rooted tree.

\section{Constructing branch groups}\label{sec:constructing-branch-groups}

For the rest of this section we fix a finitely generated infinite residually finite group $G$.
Our aim in this section is to construct a finitely generated branch group $\Gamma$ such that $G$ embeds as a subgroup of $\Gamma$.
We then use this embedding to prove \cref{thm:main-embedding-intro}.

\subsection{The alphabets}\label{subsec:alphabets}

In order to define our branch group $\Gamma$, we must first define the tree on which it acts, which in turn is based on a sequence of alphabets.
To describe this sequence, we fix a descending chain $(N_n)_{n\in \N}$ of finite index normal subgroups of $G$ that is residual, i.e.\ satisfies $\bigcap_{n\in \N} N_n = \{1_G\}$.
For each $n \in \N$, let $Q_n=G/N_n$ and let $o_n$ be the trivial element in $Q_n$.
Let $\lambda_n \colon G \rightarrow \Sym(Q_n)$ denote the action of $G$ on $Q_n$ by left multiplication of cosets.

For each $n\in \N$, let $x_n$, $y_n$, $z_n$, $p_n$, $q_n$ be pairwise distinct elements that are also distinct from the elements of $Q_n$ and let
\begin{equation}\label{eq:defX_n}
    X_n \defeq Q_n \cup \{x_n, y_n, z_n, p_n, q_n\}.
\end{equation}
We define an action $\varphi_n$ of $G$ on $X_n$ by setting
\begin{equation}\label{def:phi_k}
\varphi_n \colon G \rightarrow \Alt(X_n),\ \ g \mapsto
  \begin{cases}
    \lambda_n(g)              &\text{if }\sgn(\lambda_n(g))=1\\
    \lambda_n(g)\cdot ( p_k\ q_k ) &\text{if }\sgn(\lambda_n(g))=-1
  \end{cases}
\end{equation}
where $\sgn(\lambda_n(g))\in \{1,-1\}$ denotes the even/odd parity of the $\lambda_n(g) \in \Sym(Q_k)$.
Notice that the automorphisms $\varphi_n(g)$ stabilize the letters $x_n,y_n,z_n\in X_n$, and that they only permute $p_k$ and $q_k$ if doing so would result in an even permutation. 
The family of actions $(\varphi_n)_{n \in \N}$ is the first of three ingredients that we require in order to define the group $\Gamma$ in Theorem~\ref{thm:main-embedding-intro}.
To obtain the second ingredient, we consider the alternating group $A \defeq \Alt(\{x,y,z,o,p,q\})$ and let it act on each of the sets $X_n$ in the obvious way, that is, we define a homomorphism $\psi_n \colon A \rightarrow \Alt(X_n)$ by setting $\psi_n(a)(*_n) = (a(*))_n$ for each $* \in \{x,y,z,o,p,q\}$, and $\psi_n(a)(\omega)=\omega$ for each $\omega \in X_n \setminus\{x_n,y_n,z_n,o_n,p_n,q_n\}$.

\medskip

For later reference, we prove the following technical lemma.

\begin{lemma}\label{lem:computable-chain-with-Y}
Suppose that $G$ is $\EFRF^{+}$ and recursively presentable.
Then the descending chain $(N_n)_{n \in \N}$, as above, can be chosen such that 
\begin{enumerate}
    \item 
    there is a computable sequence $(f_n)_{n\in \N}$ of epimorphisms $f_n\colon G\to Q_n$ such that $\ker(f_n)=N_n$ for each $n\in \N$;
    \item 
    for each $n\in \N$ and each $g\in N_n\setminus \{1\}$, the word length of $g$ with respect to the generating set $S$ is bounded from below by $n+1$; 
    and
    \item 
    the sequences $(\varphi_n)_{n\in \N}$ and $(\psi_n)_{n\in \N}$, as described above, are computable.
\end{enumerate}
Notice here that when we say that a sequence of functions $(h_n)_{n\in \N}$ is \emph{computable}, we mean that there exists a Turing machine which takes as input a number $n\in \N$, and provides as output, a description of $h_n$ as discussed in \cref{sec:output-formats}.
\end{lemma}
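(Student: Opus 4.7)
The plan is to take $(N_n)_{n \in \N}$ to be exactly the chain already constructed in the preamble of this section, namely $N_n \coloneqq \ker(f_n)$ with $f_n$ as in~\eqref{eq:varphi_n}. Property (1) is then just an invocation of \cref{lem:computable-homos}, whose conclusion is precisely that $(f_n)_{n\in \N}$ is computable. Property (2) is already recorded as an observation immediately after~\eqref{eq:varphi_n}: any non-trivial $g \in N_n$ represented by a word $w$ with $|w|_S \leq n$ would lie in $W_n$, forcing $f_w(g) \neq 1$ and hence $f_n(g) \neq 1$, a contradiction. The chain is descending because $W_n \subseteq W_{n+1}$ forces $\ker(f_{n+1}) \subseteq \ker(f_n)$, and $\bigcap_n N_n = \{1_G\}$ is an immediate consequence of (2).

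The remaining content is the computability of $(\varphi_n)_{n\in \N}$ and $(\psi_n)_{n\in \N}$ in (3). To this end, I would describe a single Turing machine that on input $n$ first calls the machine from \cref{lem:computable-homos} to produce a finite description of $f_n \colon G \to Q_n$ together with an enumeration of $Q_n$ in the format fixed in \cref{sec:output-formats}. A fixed effective rule extends this to an enumeration of $X_n = Q_n \cup \{x_n, y_n, z_n, p_n, q_n\}$, e.g.\ by appending the five auxiliary symbols in a canonical order. Since $\varphi_n$ is a homomorphism, it is determined by its values on the finite generating set $S$; and $\varphi_n(s) \in \Alt(X_n)$ is computed by evaluating $f_n(s) \in Q_n$, writing down the induced left-multiplication permutation $\lambda_n(s)$, computing its sign, and then applying the case split in~\eqref{def:phi_k}. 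For $\psi_n$, one identifies the distinguished coset $o_n$ of $1_G$ in $Q_n$, which is itself a computable element of the output of $f_n$, and then outputs, for each $a \in A$, the permutation of $X_n$ acting as $a$ on $\{x_n, y_n, z_n, o_n, p_n, q_n\}$ and fixing every other letter of $X_n$.

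I do not expect a genuine obstacle here, since everything reduces to a finite combinatorial manipulation of the output of \cref{lem:computable-homos}. The only point requiring mild care is bookkeeping: the auxiliary symbols $x_n, y_n, z_n, p_n, q_n$ and the distinguished coset $o_n$ must be extracted uniformly in $n$ from the output of that machine, so that the descriptions of $\varphi_n$ and $\psi_n$ can be emitted by one and the same Turing machine in the output format fixed in \cref{sec:output-formats}.
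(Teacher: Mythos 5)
Your proposal is correct and follows essentially the same route as the paper: take $N_n = \ker(f_n)$ with $(f_n)$ the computable sequence from \cref{lem:computable-homos}, note that properties (1) and (2) come for free, and then verify computability of $(\varphi_n)$ and $(\psi_n)$ by uniformly reconstructing $Q_n$, $X_n$, $o_n$, and $\lambda_n$ from the output of that machine and applying the defining formulas. The paper spells out one small point you only gesture at — that $Q_n$ itself is effectively enumerable by closing $\{f_n(s) : s \in S\}$ under multiplication inside $\Sym(|Q_n|)$ — but this is exactly the bookkeeping you flag, not a gap.
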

\begin{proof}
From \cref{lem:computable-homos} we know that there exists a computable sequence $(f_n)_{n\in\N}$ of surjective homomorphisms $f_n\colon G\to Q_n$ such that their kernels $N_n=\ker(f_n)$ form a descending residual chain of finite-index normal subgroups, that is,
\[
    N_1\geq N_2 \geq N_3 \geq \cdots 
    \qquad
    \text{with}
    \qquad
    \bigcap_{n\in\N} N_n = \{1\}.
\]
Recall that these functions $f_n$ are surjective, in particular, $f_n(G)=Q_n$.
Moreover, from the definition of the subgroups $N_n$, we see that every nontrivial element of $N_n$ has length at least $n+1$.

Since each function $f_n$ is surjective, we see that each group $Q_n$ is computable as the finite group generated by $Q_n =\left\langle\{ f_n(s) \mid s \in S \}\right\rangle$ where $S$ is a finite set.
We then see that the sequence of functions $(\lambda_n)_{n\in \N}$, as defined at the beginning of this section, is computable as each function in the sequence is completely defined by finitely many values, i.e.,
\[
    \lambda_n(s)\cdot q = f(s)\cdot q
\]
for each $s\in S$ and $q\in Q_n$.

Since the group structures of $Q_n$ are computable, we then see that each set
\[
    X_n
    =
    Q_n \cup\{ x_n, y_n, z_n, p_n, q_n \}
\]
and the elements $o_n\in Q_n$ are also computable.
It then immediately follows that a computable description of the sequence $(\varphi_n)_{n\in\N}$, as defined in \eqref{def:phi_k}, can be constructed from our computable description of $(\lambda_n)_{n\in \N}$.

Moreover, from our computable description of $(X_n)_{n\in\N}$, it immediately follows that the sequence of functions $(\psi_n)_{n\in\N}$ is computable.
In particular, the Turing machine for such a function need only transform the action of element of the alternating group $\Alt(\{x,y,z,o,p,q\})$ to an action on the subsets of the form $\{ x_n, y_n, z_n, o_n, p_n, q_n\}\subseteq X_n$ which is known from computation of $(X_n)_{n\in \N}$.
\end{proof}

\subsection{The groups \texorpdfstring{$\Gamma_{\ell}$}{\$\textbackslash Gamma\_\textbackslash ell\$}}\label{sec:constructing-the-group-gamma}
Equipped with the sequence $X = (X_n)_{n\in\N}$, we may define the trees $\T_X^{[\ell]}$ for each $\ell\in \N_0$, as in \cref{subsec:rooted-trees}.
Our aim in this section is to define and study a sequence of branch groups $\Gamma_\ell \leq \Aut(\T_X^{[\ell]})$.
In order to define these groups, we first introduce the group $H \coloneqq G \times A$, where $A = \Alt(\{x,y,z,o,p,q\})$ is the same group as in \cref{subsec:alphabets}.
For each $\ell\in\N_0$, we define an action of $H$ on $\T_X^{[\ell]}$ via the sequences $(\varphi_n)_{n\in\N}$, $(\psi_n)_{n\in\N}$ of homomorphisms $\varphi_n\colon G\to \Alt(X_n)$ and $\psi_n\colon A\to \Alt(X_n)$ from \cref{subsec:alphabets}.
More precisely, we recursively define a sequence of homomorphisms $\widetilde{\cdot}^{[\ell]}\colon H \to \Alt(\T_X^{[\ell]})$ such that for each $h=(g,\sigma)\in H$,

\[
    \widetilde{h}^{[\ell]} = (\gamma_d)_{d \in X_{\ell+1}} \in \Alt(\T_X^{[\ell+1]})\wr \Sym(X_{\ell+1}),
\]
where
\[
\gamma_d = (\widetilde{h}^{[\ell]})_d = 
\begin{cases}
        \widetilde{h}^{[\ell+1]} &
        \text{if } d = x_{\ell+1},\\[3pt]
        \widehat{\varphi_{\ell+2}(g)} &
        \text{if } d = y_{\ell+1},\\[3pt]
        \widehat{\psi_{\ell+2}(\sigma)} &
        \text{if } d = z_{\ell+1},\\[3pt]
        \id &
        \text{otherwise}.
\end{cases}
\]
Notice here that $\widetilde{h}^{[\ell]}$ stabilizes the first level of the tree $\T_X^{[\ell]}$.
We provide \cref{fig:enter-label} to assist in the understanding of this action.
In what follows it will be convenient to extend the domains of $\varphi$ and $\psi$ to $H$ by setting $\varphi(h)\coloneqq\varphi(g)$ and $\psi(h)\coloneqq\psi(a)$ for $h = (g,a)\in H$.

\newpage

\begin{figure}[!htp]
    \centering
    \includegraphics
    [width=\linewidth]{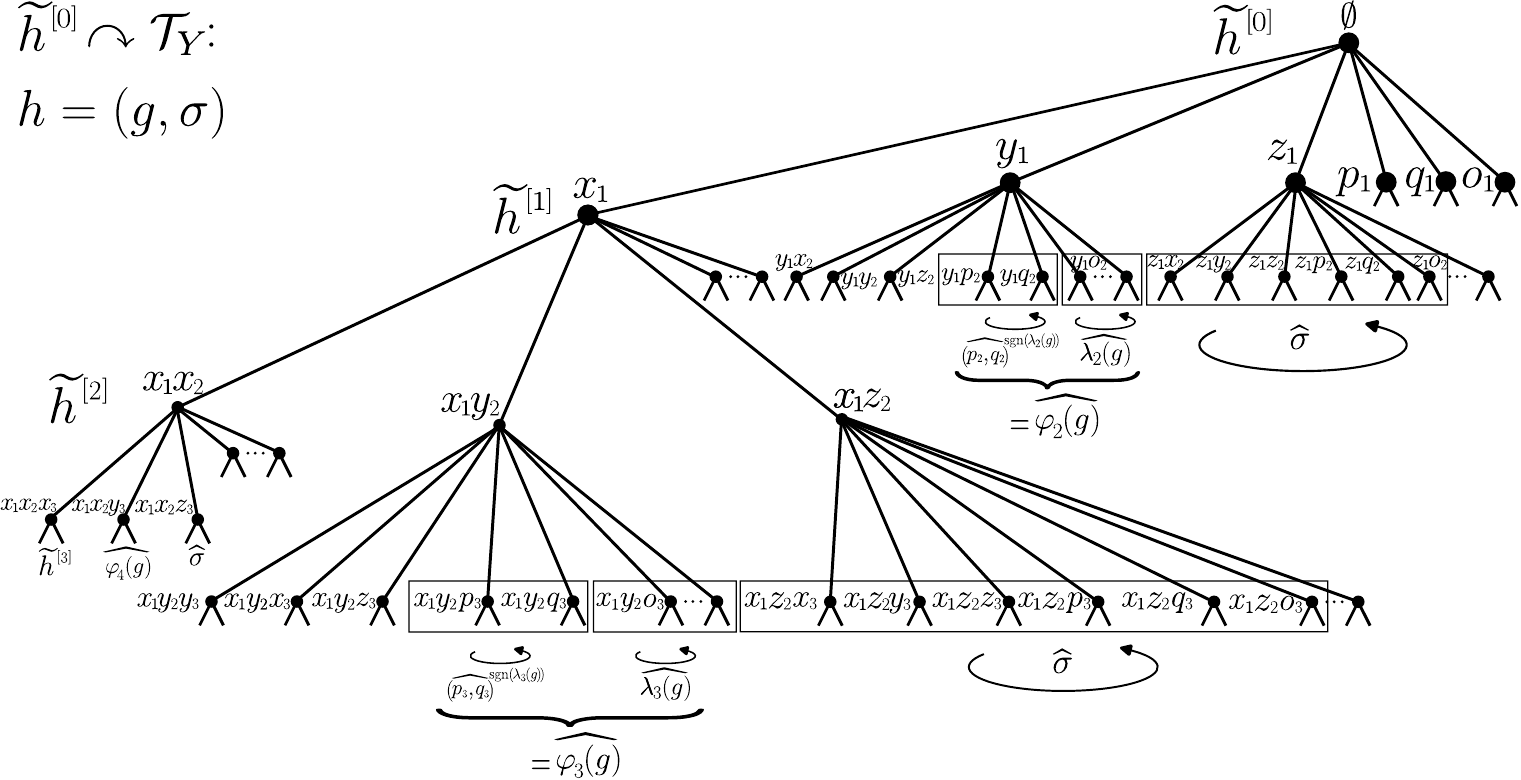}
    \caption{The action of $\widetilde{h}^{[0]}$ on $\mathcal T_X = \T_X^{[0]}$.}
    \label{fig:enter-label}
\end{figure}

\noindent In order to define our branch groups, we introduce one additional family of tree automorphisms:
For each $\ell \in \N_0$, we define
\[
B_{\ell} \defeq \widehat{\Alt(X_{\ell+1})} \leq \Aut(\T_X^{[\ell]}).
\]
That is, $B_{\ell}$ is the group of rooted automorphisms of $\T_X^{[\ell]}$ that correspond to even permutations of $X_{\ell+1}$.

For each $\ell \in \N_0$, we can now define the group
\[
    \Gamma_{\ell}
    \defeq
    \left\langle
        \widetilde{H}^{[\ell]}, B_{\ell}
    \right\rangle \leq \Aut(\T_X^{[\ell]}).
\]

That is, $\Gamma_\ell$ is the subgroup of $\Aut(\T_X^{[\ell]})$ that is generated by $\widetilde{H}^{[\ell]}$ and $B_{\ell}$.
In the remainder of this section, we show that each such group $\Gamma_\ell$ is a branch group, and we prove some lemmas which will be useful in the proofs in later sections.
We start by observing how the groups $\Gamma_{\ell}$ are related to each other.
To see this, we need the following auxiliary observation.

\begin{lemma}\label{lem:alternating}
Let $\Omega$ be a finite set and let $A,B \subseteq \Omega$ be subsets such that
\begin{enumerate}
\item $\Omega =  A \cup B$,
\item $A \cap B = \{\omega\}$ for some $\omega \in \Omega$,
\item $|A| \geq 3$.
\end{enumerate}
Let $G \leq \Alt(\Omega)$ be a subgroup that fixes at least two elements of $A \setminus \{\omega\}$ and acts transitively on $B$.
The subgroup of $\Alt(\Omega)$ that is generated by the $G$-conjugates of $\Alt(A)$ coincides with $\Alt(\Omega)$.
\end{lemma}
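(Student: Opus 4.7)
Let $H$ denote the subgroup of $\Alt(\Omega)$ generated by the $G$-conjugates of $\Alt(A)$; the plan is to exhibit enough concrete 3-cycles inside $H$ to force $H = \Alt(\Omega)$. Fix the two elements $a_1,a_2 \in A \setminus \{\omega\}$ that are guaranteed to be fixed by $G$, and note that since $|A| \geq 3$, the elements $a_1,a_2,\omega$ are three distinct points of $A$, so the 3-cycle $\tau \defeq (a_1\ a_2\ \omega)$ belongs to $\Alt(A) \leq H$.

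The first step is to use the transitive action of $G$ on $B$ to spread this one 3-cycle across $B$. Given any $b \in B$, pick $g \in G$ with $g(\omega) = b$; this is possible because $\omega \in A \cap B = \{\omega\} \subseteq B$. Then
\[
g\tau g^{-1} = (g(a_1)\ g(a_2)\ g(\omega)) = (a_1\ a_2\ b),
\]
using that $g$ fixes $a_1$ and $a_2$. So $(a_1\ a_2\ b) \in H$ for every $b \in B$. Combined with the observation that $(a_1\ a_2\ c) \in \Alt(A) \leq H$ for every $c \in A \setminus \{a_1,a_2\}$, and since $\Omega = A \cup B$, we conclude that
\[
(a_1\ a_2\ c) \in H \quad \text{for every } c \in \Omega \setminus \{a_1,a_2\}.
\]

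The final step is the standard fact that the set of 3-cycles $\{(a_1\ a_2\ c) : c \in \Omega \setminus \{a_1,a_2\}\}$ generates $\Alt(\Omega)$ whenever $|\Omega| \geq 3$; this is an elementary computation (e.g.\ $(a_1\ a_2\ c)(a_1\ a_2\ d)^{-1} = (a_1\ d\ c)$ produces all 3-cycles containing $a_1$, and any 3-cycle $(x\ y\ z)$ with $x,y,z \neq a_1$ factors as $(x\ y\ a_1)(a_1\ y\ z)$). This immediately yields $H = \Alt(\Omega)$. No step appears genuinely hard: the only place any care is needed is verifying that $g$ fixes $a_1,a_2$ when producing the conjugate 3-cycle, which is exactly the hypothesis that $G$ fixes two elements of $A \setminus \{\omega\}$.
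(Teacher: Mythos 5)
Your proof is correct and follows essentially the same route as the paper's: conjugate the $3$-cycle $(a_1\ a_2\ \omega)$ through $G$ using transitivity on $B$ to obtain all $3$-cycles $(a_1\ a_2\ b)$ with $b\in B$, combine with $(a_1\ a_2\ c)\in\Alt(A)$ for $c\in A$, and invoke the standard fact that $\{(a_1\ a_2\ c) : c\in\Omega\setminus\{a_1,a_2\}\}$ generates $\Alt(\Omega)$. The only difference is that you spell out the generating-set verification, which the paper simply cites as well known.
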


\begin{proof}
Let $H$ denote the subgroup of $\Alt(\Omega)$ that is generated by the $G$-conjugates of $\Alt(A)$.
Let $p,q \in A \setminus \{\omega\}$ be two distinct elements that are fixed by $G$.
If we conjugate the $3$-cycle $(p\ q\ \omega)$ with all elements in $G$, then, by the transitivity of the action of $G$ on $B$, we obtain that all $3$-cycles of the form $(p\ q\ x)$ lie in $H$, where $x \in B$.
Moreover, since $(p\ q\ x) \in \Alt(A)$ for every $x \in A$, it follows that $H$ contains the subset $\Set{(p\ q\ x)}{x \in \Omega} \subseteq \Alt(\Omega)$, which is well-known to generate $\Alt(\Omega)$.
\end{proof}

The following lemma appears in several variations within the literature (see for example~\cite{Segal01,KionkeSchesler21}) and is based on the so-called commutator trick, which was extracted by Segal \cite[Lemma 4]{Segal01} from a proof of Grigorchuk~\cite[Theorem 4]{Grigorchuk00}.

\begin{lemma}\label{lem:branch-group-arg}
For each $\ell \in \N_0$, the group $\Gamma_{\ell}$ is a finitely generated branch subgroup of $\Aut(\T_X^{[\ell]})$.
Moreover, the wreath decomposition
\[
  \Gamma_{\ell} \rightarrow \Gamma_{\ell+1} \wr_{X_{\ell+1}} B_{\ell},
  \ \ 
  \gamma \mapsto (\gamma_a)_{a \in X^{\ell+1}} \cdot \widehat{\pi_{1}(\gamma)},
\]
defines an isomorphism.
\end{lemma}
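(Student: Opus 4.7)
The plan is to first establish the wreath decomposition isomorphism and then derive both finite generation and the branch property as consequences of it. Injectivity of the proposed map is automatic, because the map in~\eqref{eq:wreath-iso} is already an isomorphism on the whole group $\Aut(\T_X^{[\ell]})$, so the genuine content lies in well-definedness and surjectivity.

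Well-definedness reduces to a check on the generators of $\Gamma_\ell$. For $h=(g,a)\in H$, the automorphism $\widetilde{h}^{[\ell]}$ fixes the first level of $\T_X^{[\ell]}$ by construction, and its sections are $\widetilde{h}^{[\ell+1]}$ at $x_{\ell+1}$, $\widehat{\varphi_{\ell+2}(g)}$ at $y_{\ell+1}$, $\widehat{\psi_{\ell+2}(a)}$ at $z_{\ell+1}$, and $\id$ elsewhere — each of these lies in $\Gamma_{\ell+1}$, since $B_{\ell+1}\subseteq\Gamma_{\ell+1}$. An element of $B_\ell$ acts by a rooted even permutation of $X_{\ell+1}$ with trivial sections. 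So the restricted map lands in $\Gamma_{\ell+1}\wr B_\ell$.

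Surjectivity is the key step. The top $B_\ell$-factor is immediately covered because $B_\ell\subseteq\Gamma_\ell$, so what remains is to realize the base group $\bigoplus_{v\in X_{\ell+1}}\Gamma_{\ell+1}$. Because $B_\ell$ acts transitively on $X_{\ell+1}$ and conjugation by $b\in B_\ell$ moves a vertex-supported automorphism from $v$ to $b^{-1}(v)$, it suffices to show that $x_{\ell+1}\gamma\in\Gamma_\ell$ for every $\gamma\in\Gamma_{\ell+1}$; the set of such $\gamma$ is a subgroup of $\Gamma_{\ell+1}$, so it is enough to treat $\gamma\in\widetilde{H}^{[\ell+1]}$ and $\gamma\in B_{\ell+1}$ separately. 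Here I apply Segal's commutator trick~\cite[Lemma 4]{Segal01}: commutators $[\widetilde{h}^{[\ell]},c]$ with $c\in B_\ell$ a $3$-cycle of the form $(x_{\ell+1}\ p_{\ell+1}\ q_{\ell+1})$ (which fixes $y_{\ell+1}$ and $z_{\ell+1}$) automatically kill the $y_{\ell+1}$- and $z_{\ell+1}$-sections of $\widetilde{h}^{[\ell]}$ while relocating the $x_{\ell+1}$-section; combining such commutators with further $B_\ell$-conjugates and products isolates a single section $\widetilde{h}^{[\ell+1]}$ at $x_{\ell+1}$. To pass from rooted sections of the form $\widehat{\varphi_{\ell+2}(g)}$ or $\widehat{\psi_{\ell+2}(a)}$ to the full rooted group $B_{\ell+1}=\widehat{\Alt(X_{\ell+2})}$, I invoke Lemma~\ref{lem:alternating} with $\Omega=X_{\ell+2}$, $A=\{x_{\ell+2},y_{\ell+2},z_{\ell+2},o_{\ell+2},p_{\ell+2},q_{\ell+2}\}$, $B=Q_{\ell+2}$, $\omega=o_{\ell+2}$, and $\varphi_{\ell+2}(G)\leq\Alt(X_{\ell+2})$; this group acts transitively on $Q_{\ell+2}$ by left multiplication and fixes $x_{\ell+2},y_{\ell+2},z_{\ell+2}\in A\setminus\{\omega\}$, so the lemma yields $\langle\varphi_{\ell+2}(G),\Alt(A)\rangle=\Alt(X_{\ell+2})$, which is exactly what is needed to supply every rooted section in $B_{\ell+1}$.

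Finite generation of $\Gamma_\ell$ is clear since $H=G\times A$ and $B_\ell$ are finitely generated. Spherical transitivity of $\Gamma_\ell$ on $\T_X^{[\ell]}$ follows by induction on depth using the wreath isomorphism together with the transitivity of $B_\ell$ on $X_{\ell+1}$. Iterating the wreath isomorphism $n$ times identifies a direct product of $|X_{\ell+1}|\cdots|X_{\ell+n}|$ copies of $\Gamma_{\ell+n}$ as a finite-index subgroup of $\Gamma_\ell$ that lies in $\RiSt_{\Gamma_\ell}(n)$, so $\Gamma_\ell$ is a branch group. The main obstacle is the commutator trick itself: keeping careful track of which sections cancel, which get duplicated as inverse pairs at additional vertices, and invoking Lemma~\ref{lem:alternating} at precisely the moment the available rooted sections suffice to generate all of $B_{\ell+1}$ before bootstrapping to arbitrary $\gamma\in\Gamma_{\ell+1}$.
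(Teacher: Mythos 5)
Your high-level plan (reduce to showing $a\Gamma_{\ell+1}\leq\Gamma_{\ell}$ via the wreath decomposition, apply Lemma~\ref{lem:alternating} to go from $\psi_{\ell+2}(A)$ and $\varphi_{\ell+2}(G)$ to all of $B_{\ell+1}$, invoke Segal's commutator trick) is the correct frame, but the heart of the argument --- isolating a single section in $\Gamma_{\ell}$ --- is not carried out correctly, and the way you set it up cannot work as described.

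Your commutator $\alpha=[\widetilde{h}^{[\ell]},c]$ with $c=(x_{\ell+1}\ p_{\ell+1}\ q_{\ell+1})$ is a commutator of a level-one-stabilizing element with a rooted one, so $\alpha=(\widetilde{h}^{[\ell]})^{-1}\cdot(\widetilde{h}^{[\ell]})^{c}$, and its sections are $\alpha_{a}=(\widetilde{h}^{[\ell]}_{a})^{-1}\widetilde{h}^{[\ell]}_{c(a)}$. Since $c$ fixes $y_{\ell+1},z_{\ell+1}$, those sections cancel, as you say --- but what remains is $\bigl(\widetilde{h}^{[\ell+1]}\bigr)^{-1}$ at $x_{\ell+1}$ \emph{and} $\widetilde{h}^{[\ell+1]}$ at $q_{\ell+1}$. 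This is not a single section but an inverse pair at two vertices, and no amount of further $B_{\ell}$-conjugation and multiplication by copies of $\alpha$ will collapse it: the homomorphism $\delta\colon\St_{\Gamma_{\ell}}(1)\to\Gamma_{\ell+1}^{\mathrm{ab}}$, $\delta(\beta)=\sum_{a}[\beta_{a}]$, is invariant under $B_{\ell}$-conjugation and satisfies $\delta(\alpha)=0$, whereas $\delta(x_{\ell+1}\widetilde{h}^{[\ell+1]})=[\widetilde{h}^{[\ell+1]}]$, which need not vanish. You would have to bring in the original generators $\widetilde{h}^{[\ell]}$ and deliberately cancel their $y$- and $z$-sections, which is precisely the step your sketch omits.

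The paper avoids all of this by using a genuinely different commutator: for a rooted $\sigma$ with $\sigma(z_{\ell+1})=z_{\ell+1}$ and $\sigma(\{x_{\ell+1},y_{\ell+1}\})\cap\{x_{\ell+1},y_{\ell+1}\}=\emptyset$, the element $[(\widetilde{h}^{[\ell]})^{\sigma},\widetilde{k}^{[\ell]}]$ is a commutator of two level-one-stabilizing elements whose nontrivial sections overlap only at $z_{\ell+1}$, hence it has a \emph{single} nontrivial section $\widehat{\psi_{\ell+2}([h,k])}$. Perfectness of $A\cong\Alt(6)$ then gives $z_{\ell+1}\widehat{\psi_{\ell+2}(A)}\leq\Gamma_{\ell}$; conjugating by $B_{\ell}$ and then by $\widetilde{h}^{[\ell]}$ and applying Lemma~\ref{lem:alternating} yields $y_{\ell+1}B_{\ell+1}\leq\Gamma_{\ell}$ (this is also where your invocation of Lemma~\ref{lem:alternating} needs to live --- you must realize the $\varphi_{\ell+2}(G)$-conjugations \emph{at a fixed vertex inside $\Gamma_{\ell}$}, not merely observe the abstract group-theoretic identity). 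The decisive step is then the cancellation
\[
\widetilde{h}^{[\ell]}\cdot\bigl(y_{\ell+1}\widehat{\varphi_{\ell+2}(h)^{-1}}\bigr)\cdot\bigl(z_{\ell+1}\widehat{\psi_{\ell+2}(h)^{-1}}\bigr)=x_{\ell+1}\widetilde{h}^{[\ell+1]},
\]
which kills the $y$- and $z$-sections of $\widetilde{h}^{[\ell]}$ while leaving its $x$-section intact. Your write-up anticipates the difficulty (``which get duplicated as inverse pairs at additional vertices'') but does not supply this cancellation, and without it the argument does not reach $x_{\ell+1}\widetilde{H}^{[\ell+1]}\leq\Gamma_{\ell}$.
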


\begin{proof}
Let $\ell \in \N_0$.
Since $\Gamma_\ell$ is generated by the two finitely generated groups $\widetilde{H}^{[\ell]} = \widetilde{G\times A}^{[\ell]}$ and $B_\ell = \widehat{\Alt(X_{\ell+1})}$ it follows that $\Gamma_\ell$ itself is finitely generated.
Let us now prove that $\Gamma_{\ell}$ is a branch subgroup of $\Aut(\T_X^{[\ell]})$.
To this end, we show that for each $n \in \N$, the rigid level stabilizer subgroup $\RiSt_{\Gamma_{\ell}}(n)$ of $\Gamma_{\ell}$ coincides with the ordinary level stabilizer subgroup $\St_{\Gamma_{\ell}}(n)$, which is of finite index in $\Gamma_{\ell}$.

From their definitions, it directly follows that the $n$-th level sections of $\widetilde{H}^{[\ell]}$ and $B_{\ell}$ lie in $\Gamma_{\ell+n}$ for each $n\in \N$.
Regarding~\eqref{eq:section-formula}, this implies that all $n$th level sections of all elements in $\Gamma_{\ell}$ lie in $\Gamma_{\ell+n}$.
It is therefore enough to show that $v\Gamma_{\ell+n}$ is contained in $\RiSt_{\Gamma_{\ell}}(n)$ for every $v \in X_{\ell+1} \times \cdots \times X_{\ell+n}$.
By induction on $n$, it suffices to verify that $\RiSt_{\Gamma_{\ell}}(1) = \St_{\Gamma_{\ell}}(1)$ and $\Gamma_{\ell} \cong \Gamma_{\ell+1} \wr_{X_{\ell+1}} B_{\ell}$.

To prove the latter, let $\sigma \in B_{\ell}$ be such that $\sigma(z_{\ell+1}) = z_{\ell+1}$ and $\sigma(\{x_{\ell+1},y_{\ell+1}\}) \cap \{x_{\ell+1},y_{\ell+1}\} = \emptyset$.
Then for all $h,k \in H$ we have
\[
[(\widetilde{h}^{[\ell]})^{\sigma},\widetilde{k}^{[\ell]}]_a = 
\begin{cases}
        \widehat{\psi_{\ell+2}([h,k])} &
        \text{if } a = z_{\ell+1},\\[3pt]
        \id &
        \text{otherwise}.
\end{cases}
\]
Since $A \cong \Alt(6)$ is perfect and $\psi(H) \cong A$, we deduce that $z_{\ell+1}(\widehat{\psi_{\ell+2}(A)})$ lies in $\RiSt_{\Gamma_{\ell}}(z_{\ell+1})$.
Moreover, since $B_{\ell}$ acts transitively on $Q_{\ell+1}$, there is a $\tau \in B_{\ell}$ with
\[
(z_{\ell+1}\widehat{\psi_{\ell+2}(A)})^{\tau} = y_{\ell+1}\widehat{\psi_{\ell+2}(A)} \leq \Gamma_{\ell}.
\]
Now we can conjugate $y_{\ell+1}\widehat{\psi_{\ell+2}(A)}$ by $\widetilde{h}^{[\ell]}$ to deduce that
\[
\left(y_{\ell+1}\widehat{\psi_{\ell+2}(A)}\right)^{\widetilde{h}^{[\ell]}}
= y_{\ell+1}\widehat{\psi_{\ell+2}(A)^{\varphi_{\ell+2}(h)}}
\]
lies in $\Gamma_{\ell}$.
Using that the action of $G$ on $Q_{\ell+2}$ via $\varphi_{\ell+2}$ is transitive, we can apply Lemma~\ref{lem:alternating} to conclude that $y_{\ell+1}B_{\ell+1} \leq \Gamma_{\ell}$.
As a consequence, it follows that
\[
\widetilde{h}^{[\ell]}
\cdot \left(y_{\ell+1}\varphi_{\ell+2}(h)^{-1}\right)
\cdot \left(z_{\ell+1}\psi_{\ell+2}(h)^{-1}\right)
= x_{\ell+1} \widetilde{h}^{[\ell+1]} \in \Gamma_{\ell}
\]
for every $h \in H$.
Thus $x_{\ell+1} \widetilde{H}^{[\ell+1]}$ lies in $\Gamma_{\ell}$.
By conjugating $x_{\ell+1} \widetilde{H}^{[\ell+1]}$ with an appropriate element from $B_{\ell}$ we obtain $y_{\ell+1} \widetilde{H}^{[\ell+1]} \leq \Gamma_{\ell}$ and hence
\[
y_{\ell+1} \left\langle \widetilde{H}^{[\ell+1]},B_{\ell+1} \right\rangle
= y_{\ell+1} \Gamma_{\ell+1}
\leq \Gamma_{\ell}.
\]
Taking again conjugates by elements of $B_{\ell}$ we finally get $a \Gamma_{\ell+1} \leq \Gamma_{\ell}$ for every $a \in X_{\ell+1}$, which we wanted to prove.
This gives us
\[
\Gamma_{\ell}
= \St_{\Gamma_{\ell}}(1) \rtimes B_{\ell}
= \bigoplus \limits_{a \in X_{\ell+1}} a\Gamma_{\ell+1} \rtimes B_{\ell}
= \Gamma_{\ell+1} \wr_{X_{\ell+1}} B_{\ell},
\]
which completes the proof.
\end{proof}

In the remainder of this section, we introduce one additional tool which will be useful in \cref{sec:eff-res-fin}.
We begin by defining the group of \emph{germs} as follows.

\begin{definition}\label{def:germ}
Let $G$ be a group that acts on a topological space $\Omega$ and let $\omega$ be a point in $\Omega$.
The group of \emph{germs} of $G$ at $\omega$ is defined as $\mathcal{O}_{G}(\omega) = \St_{G}(\omega) / \sim$, where $g \sim h$ if and only if $g$ and $h$ coincide on some neighbourhood of $\omega$.
Let $\pi_{\omega} \colon \St_{G}(\omega) \rightarrow \mathcal{O}_G(\omega)$ denote the canonical quotient map.
\end{definition}

\begin{lemma}\label{lem:embedding-into-germ}
Let $\ell \in \N_0$ and let $\omega_{\ell} \defeq (x_{\ell+1},x_{\ell+2},\ldots) \in \partial \T_Y^{[\ell]}$.
The map
\[
\iota_{\ell} \colon H \rightarrow \mathcal{O}_{\Gamma_{\ell}}(\omega_{\ell}),
\ \ h \mapsto \pi_{\omega_{\ell}}(\widetilde{h}^{[\ell]})
\]
is injective.
\end{lemma}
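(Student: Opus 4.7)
The plan is to unwind the germ condition and exploit the recursive structure of $\widetilde{h}^{[\ell]}$. Suppose $\iota_\ell(h) = 1$ for some $h = (g,\sigma) \in H = G \times A$. By \cref{def:germ}, this means that $\widetilde{h}^{[\ell]}$ agrees with the identity on some neighborhood of $\omega_\ell$ in $\partial \T_X^{[\ell]}$. Basic neighborhoods of $\omega_\ell$ are cylinder sets of boundary rays extending a prefix $(x_{\ell+1},\ldots,x_{\ell+n})$ of $\omega_\ell$, and an easy induction on $n$, using the recursive definition of $\widetilde{h}^{[\ell]}$, shows that $\widetilde{h}^{[\ell]}$ fixes every such prefix as a vertex. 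Thus triviality of the germ becomes equivalent to the existence of some $n \geq 0$ for which the section of $\widetilde{h}^{[\ell]}$ at $(x_{\ell+1},\ldots,x_{\ell+n})$ equals the identity. Iterating the formula $(\widetilde{h}^{[\ell]})_{x_{\ell+1}} = \widetilde{h}^{[\ell+1]}$ together with~\eqref{eq:section-formula} identifies this section with $\widetilde{h}^{[\ell+n]}$, so the task reduces to showing that $\widetilde{h}^{[\ell+n]} = \id$ forces $h = 1_H$.

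The same recursion then propagates the vanishing: the section of $\widetilde{h}^{[\ell+m]}$ at $x_{\ell+m+1}$ is $\widetilde{h}^{[\ell+m+1]}$, so $\widetilde{h}^{[\ell+m]} = \id$ for every $m \geq n$. Reading off the remaining first-level sections at $y_{\ell+m+1}$ and $z_{\ell+m+1}$ yields $\varphi_{\ell+m+2}(g) = \id$ and $\psi_{\ell+m+2}(\sigma) = \id$ for all $m \geq n$.

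It remains to verify that each of these conditions forces the corresponding factor of $h$ to be trivial. The homomorphism $\psi_k \colon A \to \Alt(X_k)$ is faithful by construction, as it is just the canonical copy of the action of $A$ on the six-element set $\{x_k,y_k,z_k,o_k,p_k,q_k\}$; hence $\sigma = 1_A$. For $g$, note from~\eqref{def:phi_k} that the odd case of the definition cannot produce $\varphi_k(g) = \id$, since it would force $\lambda_k(g) = (p_k\ q_k)$, contradicting the fact that $\lambda_k(g)$ only permutes letters in $Q_k$ and fixes $p_k,q_k$. So we are in the even case and $\lambda_k(g) = \id$, meaning $g \in N_k$ for every $k \geq \ell + n + 2$. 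Since $(N_k)_{k \in \N}$ is a residual chain, $\bigcap_k N_k = \{1_G\}$, and we conclude $g = 1_G$, hence $h = 1_H$. The only real subtlety here is the bookkeeping with the recursive definition and identifying germ triviality with triviality of a deep enough section along the ray $\omega_\ell$; once that correspondence is in place, residual finiteness of $G$ and the faithfulness of $\psi_k$ finish the argument.
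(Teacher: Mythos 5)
Your proof is correct and follows essentially the same route as the paper's: identify a basic cylinder neighbourhood of $\omega_\ell$ on which $\widetilde{h}^{[\ell]}$ acts trivially, use the recursion $(\widetilde{h}^{[\ell]})_{x_{\ell+1}} = \widetilde{h}^{[\ell+1]}$ to recognize the corresponding section as $\widetilde{h}^{[\ell+n]}$, read off the first-level sections at $y$ and $z$ to get $\varphi_k(g) = \id$ and $\psi_k(\sigma) = \id$ for all large $k$, and conclude via faithfulness of $\psi_k$ and the residual chain. Your extra observation that the ``odd'' branch of \eqref{def:phi_k} can never produce $\varphi_k(g)=\id$ (so $\varphi_k(g)=\id$ really does force $\lambda_k(g)=\id$, i.e.\ $g\in N_k$) is a small but worthwhile detail that the paper leaves implicit.
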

\begin{proof}
It is a direct consequence of the definition of $\widetilde{\cdot}^{[\ell]}$ that $\widetilde{h}^{[\ell]}$ fixes $\omega_{\ell}$, which shows that $\iota_{\ell}$ is well-defined.
Suppose now that $\pi_{\omega_{\ell}}(\widetilde{h}^{[\ell]}) = \id$, i.e.\ that there is a neighbourhood $U$ of $\omega_{\ell}$ on which $\widetilde{h}^{[\ell]}$ coincides with the identity.
Recall that a neighbourhood base of $\omega_{\ell}$ in $\partial \T_X^{[\ell]}$ is given by $(x_{\ell+1} \cdots x_{n} \T_X^{[n]})_{n > \ell}$.
Thus we can choose $n$ to be such that $x_{\ell+1} \cdots x_{n} \T_X^{[n]} \subseteq U$.
Since the section of $\widetilde{h}^{[\ell]}$ at $x_{\ell+1} \cdots x_{n}$ is given by $\widetilde{h}^{[n]}$, we deduce that $\widetilde{h}^{[n]}$ is the identity on $\T_X^{[n]}$.
In that case it follows that $\varphi_{n+k}(h)$ and $\psi_{n+k}(h)$ coincide with the identity in $\Alt(X_{n+k})$ for every $k \in \N$.
In other words, this tells us that $h = (g,\sigma)$ satisfies $\sigma = \id$ in $\Sym(X_{n+k})$ and that $g \in N_{n+k}$ for every $k \in \N$.
As $(N_k)_{k \in \N}$ was chosen to be a properly decreasing residual chain of $G$, we deduce that $h = 1$, which completes the proof.
\end{proof}

\section{Preservation of the conjugacy classes}

In the remainder of this paper, it will be useful to think of $\Gamma_{\ell}$ as a quotient of the free product $F_{\ell} \defeq H \ast B_{\ell}$ via the canonical projection $\pr_{\ell} \colon F_{\ell} \rightarrow \Gamma_{\ell}$, which is given by $\pr_{\ell}(h) = \widetilde{h}^{[\ell]}$ for $h \in H$ and $\pr_{\ell}(b) = b$ for $b \in B_{\ell}$.
Using $F_{\ell}$, we define a complexity function $\absH{\cdot} \colon \Gamma_\ell\to \N_0$, variants of which have turned out to be useful in many inductive arguments in the realm of branch groups, see e.g.~\cite{AlexoudasKlopschThillaisundaram16,KionkeSchesler22}.
By definition, each element $w\in F_\ell$ can be uniquely written as
\begin{equation}\label{eq:alternating-word}
  w = b_1 h_1 b_2 h_2 \cdots b_n h_n b_{n+1}
\end{equation}
where $n \geq 0$, each $b_i\in B_\ell$ with $b_j\neq 1$ for each $2\leq j\leq n$, and each $h_i\in H\setminus \{1\}$.
For each word $w$, as in \eqref{eq:alternating-word}, we then define $\absH{w}=n$, that is, $\absH{\cdot}\colon F_\ell \to \N_0$ counts the number of $H$ factors in a normal form of an element in $F_\ell$.
Notice that this function satisfies the triangle inequality, that is, $\absH{uv}\leq \absH{u}+\absH{v}$ for each $u,v\in F_\ell$.
We extend this definition to a map $\absH{\cdot}\colon \Gamma_\ell \to \N_0$ by setting
\[
  \absH{\gamma}
  \coloneqq
  \min\{
    \absH{w}
  \mid
    w \in F_\ell\text{ with }
    \pr_\ell(w) = \gamma
  \}.
\]

\begin{definition}\label{def:fragmented-subword}
Let $A$ be an alphabet and let $v,w\in A^*$.
We say that $v$ is a \emph{fragmented subword} of $w$ if $w$ can be factored as
\[
  w = u_1 v_1 u_2 v_2 \cdots u_k v_k u_{k+1},
\]
such that $v = v_1 v_2\cdots v_k$ and $u_i,v_i \in A^*$ for each $i$.
\end{definition}

\begin{lemma}\label{lem:length-contraction-base-case}
  Let $\ell \in \N_0$, and suppose that $w, w' \in F_\ell$ have the forms
  \[
    w = b_1 h_1 b_2
    \qquad\text{and}\qquad
    w' = b_1 h_1 b_2 h_2 b_3.
  \]
  We write $\gamma = \pr_\ell(w)$ and $\gamma'=\pr_\ell(w')$ for the element of $\Gamma_\ell$ corresponding to $w$ and $w'$, respectively.
  Then, for each $d\in X_\ell$, there are elements $w_d,w_d' \in F_{\ell+1}$ with
  \begin{align*}
    w_d &\in
        B_{\ell+1}
        \cup \{ h_1 \}\text{ and}\\
    w'_d &\in
        B_{\ell+1}
        \cup \{ h_1 h_2 \}
        \cup \{ h_1 \beta \mid \beta \in B_{\ell+1} \}
        \cup \{ \beta h_2 \mid \beta \in B_{\ell+1} \}
  \end{align*}
  such that $\gamma_d = \pr_{\ell + 1}(w_d)$ and $\gamma'_d = \pr_{\ell+1}(w'_d)$.
\end{lemma}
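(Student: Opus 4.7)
The plan is to compute the level-one sections $\gamma_d$ and $\gamma'_d$ directly from the product formula~\eqref{eq:section-formula}, exploiting two key features of the generators of $\Gamma_\ell$: every $b \in B_\ell$ is rooted, so each of its first-level sections is the identity; and every $\widetilde{h}^{[\ell]}$ with $h \in H$ fixes $X_{\ell+1}$ pointwise, so moving it through the formula introduces no permutation of the index $d$.

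First I would handle $\gamma = b_1 \widetilde{h_1}^{[\ell]} b_2$. Applying~\eqref{eq:section-formula} twice and discarding identity factors yields
\[
  \gamma_d = (\widetilde{h_1}^{[\ell]})_{b_2(d)}.
\]
By the recursive definition of $\widetilde{h_1}^{[\ell]}$ from Section~\ref{sec:constructing-the-group-gamma}, this section equals $\widetilde{h_1}^{[\ell+1]} = \pr_{\ell+1}(h_1)$ if $b_2(d) = x_{\ell+1}$, the rooted automorphism $\widehat{\varphi_{\ell+2}(h_1)}$ if $b_2(d) = y_{\ell+1}$, the rooted automorphism $\widehat{\psi_{\ell+2}(h_1)}$ if $b_2(d) = z_{\ell+1}$, and the identity otherwise. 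Since $\varphi_{\ell+2}$ and $\psi_{\ell+2}$ take values in $\Alt(X_{\ell+2})$, the middle two cases lie in $\widehat{\Alt(X_{\ell+2})} = B_{\ell+1}$, so that $w_d \in B_{\ell+1} \cup \{h_1\}$ can always be selected as claimed.

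Next, for $\gamma' = b_1 \widetilde{h_1}^{[\ell]} b_2 \widetilde{h_2}^{[\ell]} b_3$ the same manipulation gives
\[
  \gamma'_d = (\widetilde{h_1}^{[\ell]})_{b_2 b_3(d)} \cdot (\widetilde{h_2}^{[\ell]})_{b_3(d)}.
\]
A four-way case split on whether each of the two inputs $b_2 b_3(d)$ and $b_3(d)$ equals $x_{\ell+1}$ now produces the desired $w'_d$: if both equal $x_{\ell+1}$ we get $w'_d = h_1 h_2$; if only $b_2 b_3(d) = x_{\ell+1}$, then $w'_d = h_1 \beta$ with $\beta \in B_{\ell+1}$; if only $b_3(d) = x_{\ell+1}$, then $w'_d = \beta h_2$; otherwise $\gamma'_d$ is a product of two rooted elements of $B_{\ell+1}$ and we may take $w'_d \in B_{\ell+1}$.

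The argument is essentially bookkeeping and presents no genuine obstacle; the only point requiring attention is verifying that the rooted automorphisms arising as sections of $\widetilde{h_j}^{[\ell]}$ at $y_{\ell+1}$ and $z_{\ell+1}$ really lie in $B_{\ell+1}$ rather than merely in $\widehat{\Sym(X_{\ell+2})}$. This is guaranteed by the design choice, made already in Section~\ref{subsec:alphabets}, that both $\varphi_{\ell+2}$ and $\psi_{\ell+2}$ target the alternating group $\Alt(X_{\ell+2})$.
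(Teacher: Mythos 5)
Your proof is correct and follows the same route as the paper: expand the level-one sections via the formula $(\alpha\beta)_v = \alpha_{\beta(v)}\beta_v$ from~\eqref{eq:section-formula}, use that elements of $B_\ell$ are rooted (trivial sections) and that $\widetilde{h}^{[\ell]}$ stabilizes the first level, then read off the cases from the recursive definition of $(\widetilde{h}^{[\ell]})_d$. The paper states this in two sentences; you simply spell out the bookkeeping, including the correct observation that $\widehat{\varphi_{\ell+2}(h)}, \widehat{\psi_{\ell+2}(h)} \in B_{\ell+1}$ because both homomorphisms land in $\Alt(X_{\ell+2})$.
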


\begin{proof}
Recall from \cref{sec:constructing-branch-groups}, that the sections of $\widetilde{h}^{[\ell]}\in H$ are defined as
\[
  (\widetilde{h}^{[\ell]})_d = 
  \begin{cases}
    \widetilde{h}^{[\ell+1]} & \text{if } d = x_{\ell+1},\\[3pt]
    \widehat{\varphi_{\ell+2}(h)} & \text{if } d = y_{\ell+1},\\[3pt]
    \widehat{\psi_{\ell+2}(h)} & \text{if } d = z_{\ell+1},\\[3pt]
    \id & \text{otherwise}
  \end{cases}
\]
for each $d \in X_{\ell+1}$, where $\widehat{\varphi_{\ell+2}(h)},\widehat{\psi_{\ell+2}(h)} \in B_{\ell+1}$.
Regarding this, the lemma is a direct consequence of~\eqref{eq:section-formula}.
\end{proof}

\begin{lemma}\label{lem:length-contraction}
Let $\ell \in \N_0$, let $w \in F_{\ell}$ be a word of the form
\[
w = b_1 h_1 b_2 h_2 \cdots b_n h_n b_{n+1},
\]
where $b_i \in B_\ell$ and $h_i\in H$, and let $\gamma = \pr_\ell(w)$.
Then for each $d \in X_{\ell+1}$, we may find a word of the form
\[
w_d = \beta_1 h'_1 \beta_2 h'_2 \cdots \beta_m h'_m \beta_{m+1} \in F_{\ell+1}
\]
such that $\gamma_d = \pr_{\ell+1}(w_d)$ with $m \leq \lceil n/2 \rceil$, $\beta_i \in B_{\ell+1}$, and $h_i'\in H^*$ such that $h_1' h_2' \cdots h_m'\in H^*$ is a fragmented subword of $h_1 h_2 \cdots h_n \in H^*$.
In particular we have $\bigAbsH{\gamma_d} \leq \lceil \absH{\gamma}/2 \rceil$  for each $d\in X_{\ell+1}$.
\end{lemma}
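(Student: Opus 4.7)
The plan is to prove the displayed assertion by induction on $n$, relying on \cref{lem:length-contraction-base-case} both for the small cases and for the leading portion of the word at the inductive step. The base cases $n \in \{0,1,2\}$ are immediate: when $n = 0$ we have $\gamma \in B_\ell$ so every section lies in $B_{\ell+1}$; and for $n \in \{1,2\}$ the required form with $m \leq 1 = \lceil n/2 \rceil$ is exactly what \cref{lem:length-contraction-base-case} provides, taking $h_1'$ to be $h_1$, $h_2$, or $h_1 h_2$ in the non-trivial cases.

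For the inductive step $n \geq 3$, I would split
\[
w = w_1 \cdot w_2, \qquad w_1 \defeq b_1 h_1 b_2 h_2 b_3, \qquad w_2 \defeq 1 \cdot h_3 b_4 h_4 \cdots b_n h_n b_{n+1},
\]
so that $w_2 \in F_\ell$ is in the prescribed alternating form with $n-2$ H-factors, after prepending the trivial element of $B_\ell$. Setting $\gamma_i \defeq \pr_\ell(w_i)$, the section formula~\eqref{eq:section-formula} gives $\gamma_d = (\gamma_1)_{\gamma_2(d)} \cdot (\gamma_2)_d$. Applied to $w_1$ at the vertex $\gamma_2(d) \in X_{\ell+1}$, \cref{lem:length-contraction-base-case} produces a representative of $(\gamma_1)_{\gamma_2(d)}$ drawn from $B_{\ell+1} \cup \{h_1 h_2\} \cup \{h_1 \beta \mid \beta \in B_{\ell+1}\} \cup \{\beta h_2 \mid \beta \in B_{\ell+1}\}$, while the inductive hypothesis applied to $w_2$ yields a representative of $(\gamma_2)_d$ of the form $\beta'_1 h''_1 \beta'_2 \cdots \beta'_{m'} h''_{m'} \beta'_{m'+1}$ with $m' \leq \lceil (n-2)/2 \rceil$ whose H-sequence $h''_1 h''_2 \cdots h''_{m'}$ is a fragmented subword of $h_3 h_4 \cdots h_n$.

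I would then concatenate these two representatives and merge any adjacent factors from $B_{\ell+1}$ at the junction. If the first representative lies in $B_{\ell+1}$, the concatenation has $m = m'$ H-chunks; in each of the other three cases it contributes exactly one additional H-chunk, drawn from $\{h_1,\, h_2,\, h_1 h_2\}$, giving $m = m' + 1 \leq 1 + \lceil (n-2)/2 \rceil = \lceil n/2 \rceil$. Since this prefix chunk is itself a subword of $h_1 h_2$, the combined H-sequence is a fragmented subword of $h_1 h_2 \cdots h_n$, as required.

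For the final bound $\bigAbsH{\gamma_d} \leq \lceil \absH{\gamma}/2 \rceil$, I would choose $w$ so that $n = \absH{\gamma}$, then evaluate each $h'_i \in H^*$ as a single element of $H$ and delete those that become trivial, obtaining a genuine alternating word in $F_{\ell+1}$ with at most $m$ true H-factors. I expect the only point requiring care to be the four-way case analysis at the junction, together with the observation that $\gamma_2(d)$ is indeed a valid vertex of level one (since every $\widetilde{h}^{[\ell]}$ and every $b \in B_\ell$ preserves $X_{\ell+1}$) so that \cref{lem:length-contraction-base-case} may legitimately be invoked on $w_1$ at $\gamma_2(d)$; no deeper conceptual obstacle arises.
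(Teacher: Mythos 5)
Your argument is correct and is essentially the paper's own proof, merely recast as an induction on $n$. The paper groups the word $w$ directly into blocks $(b_{2i-1} h_{2i-1} b_{2i} h_{2i})$ (with a possibly short final block), applies \cref{lem:length-contraction-base-case} to each block at the relevant shifted vertex, and concatenates; you peel off one such block $w_1 = b_1 h_1 b_2 h_2 b_3$ at a time, apply the base-case lemma to $w_1$ and the inductive hypothesis to the tail $w_2$, and merge at the junction. The section formula~\eqref{eq:section-formula}, the four-way analysis of the possible forms of the section of a two-$H$-factor block, the bookkeeping $m' + 1 \leq \lceil (n-2)/2 \rceil + 1 = \lceil n/2 \rceil$, and the fragmented-subword tracking are all identical in substance to the paper's argument. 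Both routes deliver the same conclusion with the same reliance on \cref{lem:length-contraction-base-case}; your inductive phrasing simply makes explicit the iterated application of~\eqref{eq:section-formula} that the paper compresses into one step.
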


\begin{proof}
We begin by collecting adjacent terms of $w$; in particular, depending on the even/odd parity of $n$, we can group the terms of $w$ as either
\begin{align*}
  w &= (b_1 h_1 b_2 h_2) (b_3 h_3 b_4 h_4)\cdots (b_{n-3} h_{n-3} b_{n-2} h_{n-2}) (b_{n-1} h_{n-1} b_n h_n b_{n+1})\quad\text{or}\\
  w &= (b_1 h_1 b_2 h_2) (b_3 h_3 b_4 h_4)\cdots (b_{n-2} h_{n-2} b_{n-1} h_{n-1}) (b_n h_n b_{n+1}).
\end{align*}
From \cref{lem:length-contraction-base-case} and \eqref{eq:section-formula}, we see that, for each $d\in X_{\ell+1}$, we may choose a word $w_d \in F_{\ell+1}$ with $\gamma_d = \pr_{\ell+1}(w_d)$ such that either
\[
  w_d = s_1 s_2\cdots s_{n/2}
  \qquad
  \text{or}
  \qquad
  w_d = s_1 s_2\cdots s_{\lfloor n/2\rfloor} s',
\]
where
\[
  s_k
  \in
  B_{\ell+1}
  \cup \{ h_{2k-1} h_{2k} \}
  \cup \{ h_{2k-1} \beta_2 \mid \beta_2 \in B_{\ell+1} \}
  \cup \{ \beta_1 h_{2k} \mid \beta_1 \in B_{\ell+1} \}
\]
and
$
  s'
  \in
  B_{\ell+1}
  \cup \{ h_{n} \}
$.
Notice here that each $s_i$ corresponds to a section of one of the groupings of terms, and $s'$ corresponds to a section of $b_n h_n b_{n+1}$.
It is then clear that such a word has each of the desired properties of the lemma.
\end{proof}

\begin{theorem}\label{thm:fratini-embedding}
Let $\ell \in \N_0$.
Two elements $g,k \in G$ are conjugate in $G$ if and only if the corresponding elements $\widetilde{g}^{[\ell]},\widetilde{k}^{[\ell]}\in \Gamma_\ell$ are conjugate in $\Gamma_\ell$.
  That is, for every $\ell\in \N_0$, the map $g\mapsto \widetilde{g}^{[\ell]}$ is a Frattini embedding of $G$ into $\Gamma_\ell$.
\end{theorem}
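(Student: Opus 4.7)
The forward implication is immediate: $\widetilde{\cdot}^{[\ell]}\colon H\to\Gamma_\ell$ is a homomorphism and $G$ sits in $H=G\times A$ via $g\mapsto(g,1_A)$, so any $G$-conjugation lifts. For the converse I would proceed by strong induction on $n\coloneqq\absH{\gamma}$, where $\gamma\in\Gamma_\ell$ satisfies $\widetilde{k}^{[\ell]} = \gamma^{-1}\widetilde{g}^{[\ell]}\gamma$. The degenerate case $g=1$ is immediate from the injectivity of $h\mapsto\widetilde{h}^{[\ell]}$ given by \cref{lem:embedding-into-germ}, so I may assume $g,k\neq 1$. The driving computation takes the section at the vertex $d=x_{\ell+1}$: since $\widetilde{g}^{[\ell]}$ stabilizes the first level, the section formula \eqref{eq:section-formula} reduces the hypothesis to
\[
\widetilde{k}^{[\ell+1]} \;=\; (\gamma_{x_{\ell+1}})^{-1}\,(\widetilde{g}^{[\ell]})_{\gamma(x_{\ell+1})}\,\gamma_{x_{\ell+1}}.
\]
The left-hand side is nontrivial, and inspection of the recursive definition of $\widetilde{g}^{[\ell]}$ shows that $(\widetilde{g}^{[\ell]})_e$ is nontrivial only at $e\in\{x_{\ell+1},y_{\ell+1}\}$, so $\gamma(x_{\ell+1})$ must lie there.

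To exclude $\gamma(x_{\ell+1})=y_{\ell+1}$, observe that the corresponding section is the rooted element $\widehat{\varphi_{\ell+2}(g)}$; applying $\pi_1$ to the displayed equation and using that $\widetilde{k}^{[\ell+1]}$ acts trivially on $X_{\ell+2}$ forces $\varphi_{\ell+2}(g)=\id$, whence $\widehat{\varphi_{\ell+2}(g)}$ is trivial and so is $\widetilde{k}^{[\ell+1]}$, contradicting $k\neq 1$. Thus $\gamma(x_{\ell+1})=x_{\ell+1}$ and the equation takes the clean form
\[
\widetilde{k}^{[\ell+1]} \;=\; (\gamma_{x_{\ell+1}})^{-1}\,\widetilde{g}^{[\ell+1]}\,\gamma_{x_{\ell+1}}
\]
inside $\Gamma_{\ell+1}$, which has the same shape as the original hypothesis but one level deeper.

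Closing the induction is straightforward for most values of $n$: for $n=0$ the rooted $\gamma$ has trivial section, so the displayed equation becomes $\widetilde{k}^{[\ell+1]}=\widetilde{g}^{[\ell+1]}$ and injectivity yields $k=g$; for $n\geq 2$, \cref{lem:length-contraction} gives $\absH{\gamma_{x_{\ell+1}}}\leq\lceil n/2\rceil<n$, so the induction hypothesis applies at level $\ell+1$. The main obstacle is the borderline case $n=1$, in which the halving estimate is not a strict decrease. Here I would invoke \cref{lem:length-contraction-base-case}, which forces $\gamma_{x_{\ell+1}}\in B_{\ell+1}\cup\{\widetilde{h_1}^{[\ell+1]}\}$: the rooted alternative collapses to the $n=0$ case, whereas the other produces $\widetilde{k}^{[\ell+1]}=\widetilde{h_1^{-1}gh_1}^{[\ell+1]}$, and projecting $h_1=(g_1,a_1)\in G\times A$ onto the $G$-factor extracts the required $G$-conjugator $g_1$.
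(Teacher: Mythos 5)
Your proof is correct and follows essentially the same strategy as the paper: induction on $\absH{\gamma}$, with \cref{lem:length-contraction-base-case} handling the $\absH{\gamma}=1$ case and \cref{lem:length-contraction} driving the inductive step, reducing via sections at $x_{\ell+1}$. The only difference is organizational --- you perform the $\gamma(x_{\ell+1})=x_{\ell+1}$ reduction uniformly upfront via the $\pi_1$-projection argument, whereas the paper spreads that bookkeeping across its two explicit base cases and the inductive step --- but the key ideas and lemmas are identical.
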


\begin{proof}
Since the map $g\mapsto \widetilde{g}^{[\ell]}$ is a homomorphism it is clear that $\widetilde{g}^{[\ell]},\widetilde{k}^{[\ell]}\in \Gamma_\ell$ are conjugate in $\Gamma_\ell$ if $g,k \in G$ are conjugate in $G$.
Suppose now that $g,k\in G$ are elements that are not conjugate in $G$.
Thus, to complete our proof, it remains to be shown that $\widetilde{g}^{[\ell]}$ and $\widetilde{k}^{[\ell]}$ are not conjugate in $\Gamma_\ell$.
Since $g\mapsto \widetilde{g}^{[\ell]}$ is injective, this property immediately follows if one of $g$ or $k$ is the identity.
In the remainder of this proof, we assume without loss of generality that neither $g$ nor $k$ is the identity of $G$.			 
Suppose for contradiction, that $\widetilde{g}^{[\ell]}$ and $\widetilde{k}^{[\ell]}$ are conjugate, that is, that there exists some $\gamma\in \Gamma_\ell$ for which $\gamma^{-1} \widetilde{g}^{[\ell]} \gamma = \widetilde{k}^{[\ell]}$.
We will show, by induction on $\absH{\gamma}$, that no such element $\gamma\in \Gamma_\ell$ can exist.
We begin with the base cases $\absH{\gamma} = 0$ and $\absH{\gamma}=1$, as follows.

\medskip

\noindent\underline{Base case}: $\absH{\gamma} = 0$.
\nopagebreak

\nopagebreak\smallskip
\nopagebreak\noindent
To begin, suppose that $\absH{\gamma}=0$, that is, $\gamma\in B_\ell$.

Recall from the definition of $\widetilde{\cdot}^{[\ell]}$ that
$(\widetilde{g}^{[\ell]})_{x_{\ell+1}} = \widetilde{g}^{[\ell+1]}$ and
$(\widetilde{k}^{[\ell]})_{x_{\ell+1}} = \widetilde{k}^{[\ell+1]}$, neither of which are finitary;
and that for each $a\in X_{\ell+1}\setminus\{x_{\ell+1}\}$, we have $(\widetilde{g}^{[\ell]})_{a} \in B_{\ell+1}$ and
$(\widetilde{k}^{[\ell]})_{a} \in B_{\ell+1}$, which are finitary.
Moreover, notice that, if $\gamma^{-1} \widetilde{g}^{[\ell]} \gamma = \widetilde{k}^{[\ell]}$, then
\[
(\gamma^{-1} \widetilde{g}^{[\ell]} \gamma)_{x_{\ell+1}}
= \widetilde{k}^{[\ell+1]}.
\]
Hence, it must be the case that $\gamma\cdot x_{\ell+1}=x_{\ell+1}$ and thus $\widetilde{g}^{[\ell+1]} = \widetilde{k}^{[\ell+1]}$.
Since the map $\widetilde{\cdot}^{[\ell+1]} \colon G \to \Aut(\T_X^{[\ell+1]})$ is injective, this would imply that $g=k$ which contradicts our assumption that $g$ and $k$ are not conjugate in $G$.

\medskip

\noindent\underline{Base case}: $\absH{\gamma} = 1$.
\nopagebreak

\nopagebreak\smallskip
\nopagebreak\noindent
Suppose that $\absH{\gamma}=1$, then there exists an element $w\in F_\ell$ with $\gamma = \pr_\ell(w)$ and
\begin{equation}\label{eq:base-case1}
    w = b_1 h_1 b_2\in F_\ell,
\end{equation}
where $b_i\in B_\ell$ and $h_1 \in H$.
Since $\widetilde{g}^{[\ell]}\in \St_{\Gamma_\ell}(1)$ and $\gamma^{-1} \widetilde{g}^{[\ell]} \gamma = \widetilde{k}^{[\ell]}$, there exist $a,b\in X_{\ell+1}$ such that
\begin{equation}\label{eq:eq1}
    \bigl(\gamma_a\bigr)^{-1}
    \cdot (\widetilde{g}^{[\ell]})_b
    \cdot \gamma_a
    =
    (\widetilde{k}^{[\ell]})_{x_{\ell+1}} = \widetilde{k}^{[\ell+1]}.
\end{equation}
From the definition of $\widetilde{g}^{[\ell]}$, we then see that either $(\widetilde{g}^{[\ell]})_b \in B_{\ell+1}$ or $(\widetilde{g}^{[\ell]})_b = \widetilde{g}^{[\ell+1]}$.
Further, from \cref{lem:length-contraction-base-case}, we see that \[\gamma_{a} \in B_{\ell+1} \cup \left\{\widetilde{h_1}^{[\ell+1]}\right\}.\]
From (\ref{eq:eq1}), the remainder of the proof of this base case falls into 4 cases as follows.

\medskip

\noindent\textit{Case 1:}
$\tau^{-1} \cdot \widetilde{g}^{[\ell+1]}\cdot \tau = \widetilde{k}^{[\ell+1]}$ for some $\tau\in B_{\ell+1}$.

\smallskip\noindent
In this case, our contradiction follows from the previous base case.

\medskip

\noindent\textit{Case 2:}
$\tau^{-1}\cdot \sigma\cdot \tau = \widetilde{k}^{[\ell+1]}$ for some $\sigma,\tau\in B_{\ell+1}$.

\smallskip\noindent
In this case, the contradiction follows as $\widetilde{k}^{[\ell+1]}$ is not finitary and in particular does not belong to $B_{\ell+1}$ for $k\neq 1$.

\medskip

\noindent\textit{Case 3:}
$(\widetilde{h_1}^{[\ell+1]})^{-1}\cdot (\widetilde{g}^{[\ell+1]})\cdot (\widetilde{h_1}^{[\ell+1]}) = \widetilde{k}^{[\ell+1]}$.

\smallskip\noindent
Since $\widetilde{\cdot}^{[\ell+1]} \colon H\to\Gamma_{\ell+1}$ is injective, it would then follow that $h_1^{-1} g h_1 = k$ in $H=G\times A$.
Since $H$ is a direct product of $G$ and $A$, with $h_1 = (x,y)\in G\times A = H$, we see that this implies that $x^{-1} g x = k$ in $G$.
This contradicts our assumption that $g$ and $k$ are not conjugate in $G$.

\medskip

\noindent\textit{Case 4:}
$(\widetilde{h_1}^{[\ell+1]})^{-1}\cdot \sigma\cdot (\widetilde{h_1}^{[\ell+1]}) = \widetilde{k}^{[\ell+1]}$ for some $\sigma\in B_{\ell+1}$.

\smallskip\noindent
If $\sigma = 1$, then $\widetilde{k}^{[\ell+1]} = 1$, which would contradict our assumption that $k$ is non-trivial.
Suppose therefore that $\sigma \in B_{\ell+1}\setminus \{1\}$.
Then, a contradiction arises as
\[									 
(\widetilde{h_1}^{[\ell+1]})^{-1}\cdot \sigma\cdot (\widetilde{h_1}^{[\ell+1]})\notin \St_{\Gamma_{\ell+1}}(1)
\qquad
\text{while}
\qquad
\widetilde{k}^{[\ell+1]}\in \St_{\Gamma_{\ell+1}}(1).
\]
This completes all subcases of the base case $\absH{\gamma}=1$.

\medskip

\noindent\underline{Inductive step}:
\nopagebreak

\nopagebreak\smallskip
\nopagebreak\noindent
We now assume for induction that there is some $m\in \N$ such that for every $\ell\in\N_0$ there is no word $\gamma'\in\Gamma_\ell$ with $\absH{\gamma'}\leq m$ such that $(\gamma')^{-1} \widetilde{g}^{[\ell]} \gamma' = \widetilde{k}^{[\ell]}$.
Suppose for contradiction that there exists $\gamma\in \Gamma_\ell$ with $\absH{\gamma}=m+1$ for which $\gamma^{-1} \widetilde{g}^{[\ell]} \gamma = \widetilde{k}^{[\ell]}$.
By the definition of $\absH{\cdot}$, there must exist some element $w\in F_\ell$ which can be written as
\[
    w = b_1 h_1 b_2 h_2 \cdots b_m h_m b_{m+1} h_{m+1} b_{m+2} 
\]
where $b_i\in B_\ell$, $h_i\in H$, and $\gamma = \pr_\ell(w)$.

Notice that if $\gamma^{-1} \widetilde{g}^{[\ell]} \gamma = \widetilde{k}^{[\ell]}$, then it would follow that

\begin{equation}\label{eq:projections}
    \bigl(\gamma^{-1} \widetilde{g}^{[\ell]} \gamma\bigr)_{x_{\ell+1}}
    =
    (\widetilde{k}^{[\ell]})_{x_{\ell+1}}
    =
    \widetilde{k}^{[\ell+1]}.
\end{equation}
Let $\beta\in B_\ell$ be the unique element for which
$
    \gamma\beta\in \St_{\Gamma_\ell}(1).
$
From \eqref{eq:section-formula}, \eqref{eq:projections} and the definition of $\widetilde{\cdot}^{[\ell]}\colon H \to \Gamma_\ell$, we see that there exists some $d\in X_{\ell+1}$ for which
\begin{equation}\label{eq:projections2}
    \bigl((\gamma\beta)_{d}\bigr)^{-1}
    \cdot (\widetilde{g}^{[\ell]})_d
    \cdot (\gamma\beta)_{d}
    =
    \widetilde{k}^{[\ell+1]}.
\end{equation}
Note that we either have $(\widetilde{g}^{[\ell]})_d\in B_\ell$ or $(\widetilde{g}^{[\ell]})_d = \widetilde{g}^{[\ell+1]}$.
We consider these two possibilities in separate cases.

\medskip

\noindent\textit{Case 1}: $(\widetilde{g}^{[\ell]})_d\in B_\ell$.
\nopagebreak

\nopagebreak\smallskip
\nopagebreak\noindent
Same arguments as in case 4 of the proof of the base case $\absH{\gamma}=1$.

\medskip

\noindent\textit{Case 2}: $(\widetilde{g}^{[\ell]})_d = \widetilde{g}^{[\ell+1]}$.
\nopagebreak

\nopagebreak\smallskip
\nopagebreak\noindent
From \cref{lem:length-contraction}, we see that $\bigAbsH{(\gamma\beta)_d}< \absH{\gamma}$.
Thus, we have a $\gamma'\in \Gamma_{\ell+1}$ with $\absH{\gamma'}< \absH{\gamma}$ for which $\gamma'^{-1} \widetilde{g}^{[\ell+1]}\gamma' = \widetilde{k}^{[\ell+1]}$ which contradicts our induction hypothesis.

\medskip

\noindent\underline{Conclusion}:
\nopagebreak

\nopagebreak\smallskip
\nopagebreak\noindent
In all cases, we see that a contradiction arises if we assume that there exists some $\gamma\in \Gamma_{\ell}$ for which $\gamma^{-1} \widetilde{g}^{[\ell]}\gamma = \widetilde{k}^{[\ell]}$ where $g$ and $k$ are not conjugate in $G$.
\end{proof}

We obtain the following result as an immediate corollary to \cref{thm:fratini-embedding}.

\begin{corollary}\label{cor:conjugate-iff}
Let $\ell \in \N_0$.
If $G$ has an unsolvable conjugacy problem, then $\Gamma_\ell$ has an unsolvable conjugacy problem.
\end{corollary}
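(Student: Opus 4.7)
The plan is to prove the contrapositive: assuming the conjugacy problem in $\Gamma_\ell$ is solvable, I would derive an algorithm that decides the conjugacy problem in $G$, contradicting the hypothesis. The key ingredient is the Frattini embedding $G \hookrightarrow \Gamma_\ell$ given by $g \mapsto \widetilde{g}^{[\ell]}$ provided by \cref{thm:fratini-embedding}, which both preserves and reflects conjugacy.

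The only point to verify is that this embedding is algorithmically tractable. Fix a finite generating set $S$ for $G$. Since $\widetilde{\cdot}^{[\ell]}$ is a group homomorphism from $H = G \times A$ to $\Gamma_\ell$, and $\Gamma_\ell$ is generated by $\widetilde{H}^{[\ell]} \cup B_\ell$, any word $w = s_1 s_2 \cdots s_n$ over $S$ representing $\overline{w}\in G$ maps to the word $\widetilde{s_1}^{[\ell]} \cdots \widetilde{s_n}^{[\ell]}$ over the generators of $\Gamma_\ell$ (identifying $s \in G$ with $(s, 1_A) \in H$). This translation is computable by inspection and involves no decision at all, just a rewriting letter by letter.

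Putting this together, given two words $u, v$ over $S$ representing $g, k \in G$, I compute their images under $\widetilde{\cdot}^{[\ell]}$ as words over the generators of $\Gamma_\ell$, and then apply the hypothetical decision procedure for conjugacy in $\Gamma_\ell$ to the pair $(\widetilde{g}^{[\ell]},\widetilde{k}^{[\ell]})$. By \cref{thm:fratini-embedding}, the output correctly determines whether $g$ and $k$ are conjugate in $G$, contradicting the unsolvability of the conjugacy problem in $G$. There is essentially no obstacle to overcome here: the hard work has already been carried out in establishing the Frattini embedding, and this corollary reduces to a standard reduction-of-decision-problems argument.
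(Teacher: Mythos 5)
Your proof is correct and matches what the paper leaves implicit: the paper presents \cref{cor:conjugate-iff} as an immediate consequence of the Frattini embedding in \cref{thm:fratini-embedding}, and your contrapositive reduction (translate words over $S$ into words over $\Sigma_\ell$, then invoke the hypothetical oracle for $\Gamma_\ell$) is exactly the standard argument one would write out. The only detail you add explicitly, that the translation $s \mapsto \widetilde{(s,1_A)}^{[\ell]}$ is a syntactic rewriting requiring no decision procedure, is correct and harmless to spell out.
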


\section{Preservation of effective residual finiteness}\label{sec:eff-res-fin}

Our aim in this section is to prove the second part of Theorem~\ref{thm:main-embedding-intro}.
In our current setting this ends up in showing that $\EFRF^{+}$ and the recursive presentability of $G$ can be extended to $\Gamma_{0}$.
Let us start by recalling and extending the notation from the previous sections.

\subsection{Setting the stage}

Let $G$ be a finitely generated infinite residually finite group and let $S$ be a finite symmetric generating set of $G$.
We assume from now on that $G$ is recursively presented and $\EFRF^{+}$.
In this case Lemma~\ref{lem:computable-chain-with-Y} provides us with a sequence $(N_n)_{n \in \N}$ of finite index normal subgroups of $G$ such that the following hold.
\begin{enumerate}
\item There exists a computable sequence $(f_n)_{n \in \N}$ of surjective homomorphisms $f_n \colon G \rightarrow Q_n$ with $\ker(f_n) = N_n$ for every $n \in \N$.
\item For each $n \in \N$ and each $g \in N_n \setminus \{1\}$ the word length of $g$ with respect to $S$ is bounded from below by $n+1$.\label{eq:word-length-bounded-below}
\item The sequences $X = (X_n)_{n \in \N}$ and $Q = (Q_n)_{n \in \N}$, where
\[
X_n \defeq Q_n \cup \{x_n,y_n,z_n,p_n,q_n\}
\]
are computable.\label{eq:computable-alphabets}
\item The sequences $(\varphi_n)_{n \in \N}$, $(\psi_n)_{n \in \N}$
of the homomorphisms
\[
\varphi_n,\psi_n \colon G \rightarrow \Alt(X_n)
\]
from Section~\ref{sec:constructing-branch-groups} are computable.\label{eq:computable-phi-and-psi}
\end{enumerate}

\medskip

\noindent Recall that for each $\ell \in \N_0$, the group $\Gamma_{\ell}$ is defined as the subgroup of $\Aut(\T_X^{[\ell]})$ that is generated by $\widetilde{H}^{[\ell]}$ and $B_{\ell}$.
Thus a finite symmetric generating set of $\Gamma_{\ell}$ is given by $\Sigma_{\ell} \defeq ((\widetilde{S}^{[\ell]} \times \widetilde{A}^{[\ell]}) \cup B_{\ell}) \setminus \{1\}$.

\begin{lemma}\label{lem:effectiveness-via-stabilizer-quotients}
Let $w \in \Sigma_{0}^{\ast}$ be a word of length $\ell \in \N_0$.
Then $w$ represents the trivial element in $\Gamma_0$ if and only if $w$ represents an element in $\St_{\Gamma_{0}}(2\ell)$.
\end{lemma}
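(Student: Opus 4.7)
The forward implication is immediate since $\pr_0(w) = 1$ lies in every level stabilizer, so the plan focuses on the converse. I intend to prove by induction on $\ell$ the stronger statement: for every $k \in \N_0$ and every $w \in \Sigma_k^*$ of length $\ell$ with $\pr_k(w) \in \St_{\Gamma_k}(2\ell)$, the element $\pr_k(w)$ is trivial in $\Gamma_k$. The case $\ell = 0$ is immediate. For $\ell \geq 1$, write $w = \sigma_1 \cdots \sigma_\ell$ with each $\sigma_i \in \Sigma_k$ and set $\gamma = \pr_k(w)$. Iterating the section formula~\eqref{eq:section-formula} yields $\gamma_d = (\sigma_1)_{d_1}(\sigma_2)_{d_2}\cdots(\sigma_\ell)_{d_\ell}$ for appropriate vertices $d_i \in X_{k+1}$, and each factor is either the identity or a single generator of $\Sigma_{k+1}$: a letter $\sigma_i \in B_k$ always contributes $\id$, since rooted automorphisms have trivial sections, while a letter $\sigma_i = \widetilde{h_i}^{[k]}$ contributes one of $\widetilde{h_i}^{[k+1]}$, $\widehat{\varphi_{k+2}(h_i)}$, $\widehat{\psi_{k+2}(h_i)}$, or $\id$, according to whether $d_i$ equals $x_{k+1}$, $y_{k+1}$, $z_{k+1}$, or something else.

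The argument then splits into two cases. If $w$ contains at least one $B_k$-letter, then for every $d \in X_{k+1}$ the decomposition above produces a word $w_d \in \Sigma_{k+1}^*$ with $\pr_{k+1}(w_d) = \gamma_d$ and $|w_d| \leq \ell - 1$. Since $\gamma \in \St_{\Gamma_k}(2\ell)$ implies $\gamma_d \in \St_{\Gamma_{k+1}}(2\ell - 1) \subseteq \St_{\Gamma_{k+1}}(2|w_d|)$, the inductive hypothesis applied at level $k+1$ yields $\gamma_d = 1$ for every $d$, and $\gamma \in \St_{\Gamma_k}(1)$ then forces $\gamma = 1$. If instead $w$ has no $B_k$-letter, then $\gamma = \widetilde{h}^{[k]}$ for $h = h_1 h_2 \cdots h_\ell \in H$, and writing $h = (g,a)$ with $g \in G$ and $a \in A$ one has $|g|_S \leq \ell$. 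A descending iteration on $m$ using the level-$1$ decomposition of $\widetilde{h}^{[k]}$ shows that, for $m \geq 2$, the condition $\widetilde{h}^{[k]} \in \St_{\Gamma_k}(m)$ forces the rooted sections $\widehat{\varphi_{k+2}(g)}$ and $\widehat{\psi_{k+2}(a)}$ at $y_{k+1}$ and $z_{k+1}$ to fix level $1$ of the subtree and hence to be trivial, giving $g \in N_{k+2}$ and $a = 1$, while $\widetilde{h}^{[k+1]} \in \St_{\Gamma_{k+1}}(m-1)$; iterating this $m-1$ times yields $g \in N_{k+m}$. Specializing to $m = 2\ell$ and invoking property~(2) of the chain $(N_n)$ together with $|g|_S \leq \ell < k + 2\ell + 1$ forces $g = 1$, so $h = 1$ and $\gamma = 1$.

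The main obstacle is that when $w$ consists entirely of $\widetilde{H}^{[k]}$-letters, the length of the section word does not strictly decrease (indeed $\gamma_{x_{k+1}} = \widetilde{h_1 h_2 \cdots h_\ell}^{[k+1]}$ may still require $\ell$ letters to express), so the inductive hypothesis cannot be invoked at level $k+1$ in this subcase. Instead, one must exploit the interaction between the $y_{k+1}$- and $z_{k+1}$-branches of the level-$1$ decomposition of $\widetilde{h}^{[k]}$ together with the word-length property of the residual chain $(N_n)_{n \in \N}$ to conclude $h = 1$ directly, which is why the bound $2\ell$ (rather than simply $\ell$) arises: one factor of $\ell$ from the need to descend $\ell$ levels to force $g \in N_{k+2\ell}$, and the implicit ``$+1$'' guaranteeing $m \geq 2$ at the base of the iteration.
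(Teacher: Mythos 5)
Your proof is correct, and it takes a genuinely different route from the paper. The paper proves the lemma by invoking \cref{lem:length-contraction}: writing $w$ in block form $v_1 w_1 \cdots v_m w_m v_{m+1}$ with $m \leq \ell$ factors from $H$, it iterates the contraction lemma $\ell$ times (using the halving bound $m \mapsto \lceil m/2\rceil$ and the fragmented-subword bookkeeping) to show that every section of $\gamma$ at a vertex of level $\ell$ has the form $\tau_1 \widetilde{(g,\sigma)}^{[\ell]}\tau_1^{-1}$ with $\tau_1 \in B_\ell$ and $|g|_S \leq \ell$; the condition $\gamma \in \St_{\Gamma_0}(2\ell)$ then places these sections in $\St_{\Gamma_\ell}(\ell)$, forcing $\sigma=1$ and $g \in N_k$ for $k$ up to $2\ell$, whence $g=1$ by the length bound on nontrivial elements of $N_k$. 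You instead run a direct induction on the word length $\ell$ (uniformly over the level $k$), exploiting the dichotomy: either $w$ contains a $B_k$-letter, in which case every first-level section is represented by a word of length $\leq \ell-1$ (since rooted letters contribute trivial sections) and lies in $\St_{\Gamma_{k+1}}(2\ell-1) \subseteq \St_{\Gamma_{k+1}}(2(\ell-1))$, so the inductive hypothesis applies; or $w$ has no $B_k$-letter, in which case $\gamma = \widetilde{h}^{[k]}$ for $h=(g,a)$ with $|g|_S \leq \ell$, and unwinding the recursive definition of $\widetilde{\cdot}^{[k]}$ for $2\ell-1$ steps yields $a=1$ and $g \in N_{k+2\ell}$, hence $g=1$. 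Your approach avoids \cref{lem:length-contraction} and the fragmented-subword machinery entirely, at the cost of the explicit case split; it also makes transparent why the factor $2$ in $2\ell$ suffices, and (in the pure-$\widetilde H$ case) actually establishes the sharper bound $g \in N_{k+2\ell}$ rather than merely $g \in N_{2\ell}$. Both arguments are sound; the paper's choice to reuse \cref{lem:length-contraction} is natural since that lemma is also needed in the proof of \cref{thm:fratini-embedding}, whereas your version is more self-contained.
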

\begin{proof}
Since the claim is trivial otherwise, we may assume that $\ell > 0$.
Let $\gamma \in \Gamma_0$ be the element represented by $w$.
Suppose that $\gamma \in \St_{\Gamma_{0}}(2\ell)$.
We want to show that $\gamma$ is the trivial element.
To this end, we write
\[
w = v_1 w_{1} \cdot \ldots \cdot v_m w_{m} v_{m+1},
\]
where $v_i \in B_0^{\ast}$ and
$w_i \in (S \times A)^{\ast}$ for every $i$.
Without loss of generality we may assume that the words $w_{i}$ are non-empty.
Then
\[
m \leq \sum \limits_{i=1}^m \abs{w_i} \leq \abs{w} = \ell.
\]
From Lemma~\ref{lem:length-contraction} we know that for every $a \in X_1$, the section $\gamma_a$ is represented by a word of the form
\[
w_a = v_1' w_{1}' \cdot \ldots \cdot v_{m'}' w_{m'}' v_{m'+1}',
\]
where $v_i' \in B_1^{\ast}$, $w_{1}' \ldots w_{m'}'$ is a fragmented subword of $w_{1} \ldots w_{m}$, and $m' < m$ if $m \geq 2$.
Using the fact that the section of $\gamma$ at a word of the form $u_1u_2$ is given by $\gamma_{u_1u_2} = (\gamma_{u_1})_{u_2}$, an inductive application of Lemma~\ref{lem:length-contraction} tells us that for every $u \in X_1 \times \ldots \times X_{\ell}$, the section of $\gamma$ at $u$ is represented by a word of the form
\[
w_u = v_1^{(u)} w^{(u)} v_2^{(u)},
\]
where $v_1^{(u)},v_2^{(u)} \in B_{\ell}^{\ast}$ and $w^{(u)}$ is a fragmented subword of $w_{1} \ldots w_{m}$.
Note that the latter implies that the word length of $w^{(u)} \in (S \times A)^{\ast}$ is bounded above by $m \leq \ell$.
As a consequence, we have
\[
\gamma_{u}
= \tau_1 \cdot \widetilde{(s_1,\sigma_1)}^{[\ell]} \cdot \ldots \cdot \widetilde{(s_k,\sigma_k)}^{[\ell]} \cdot \tau_2
\]
for some $k \leq \ell$ and appropriate elements $s_i \in S$, $\sigma_i \in A$, and $\tau_1,\tau_2 \in B_{\ell}$.
Let $g \in G$ be the element represented by $s_1 \cdot \ldots \cdot s_k$ and let $\sigma = \sigma_1 \cdot \ldots \cdot \sigma_k$.
Then
\[
\gamma_{u}
= \tau_1 \cdot \widetilde{(g,\sigma)}^{[\ell]} \cdot \tau_2
= \tau_1 \cdot \widetilde{(g,\sigma)}^{[\ell]} \cdot \tau_1^{-1} \cdot \tau_1 \tau_2.
\]
From our assumption that $\gamma \in \St_{\Gamma_{0}}(2\ell)$ it follows that $\tau_1 \tau_2 = \id$ and hence that $\widetilde{(g,\sigma)}^{[\ell]}$ lies in $\St_{\Gamma_{\ell}}(\ell)$.
Regarding the definition of the map $\widetilde{\cdot}^{[\ell]}$, we see that
\[
\phi_{k}(g) = \id \in \Sym(X_{k}) \text{ and } \psi_{k}(\sigma) = \id \in \Sym(X_{k})
\]
for every $\ell < k \leq 2\ell$.
Since $\psi_k \colon A \rightarrow \Sym(X_{k})$ is injective, the latter shows that $\sigma = \id$.
To see that $g$ is the trivial element, we recall from~\eqref{eq:word-length-bounded-below} that the word length of $g$ with respect to $S$ would be at least $\ell+1$ if $g$ is non-trivial.
However, the word length of $g$ is bounded from above by $k \leq m \leq \ell$, which implies that $g = 1$ and hence $\gamma_u = 1$.
Since $u \in X_1 \times \ldots \times X_{\ell}$ was arbitrary, it follows that the section of $\gamma$ at each word of length $\ell$ is trivial.
Together with our assumption that $\gamma \in \St_{\Gamma_{0}}(2\ell)$ this implies $\gamma = 1$, which completes the proof.
\end{proof}

With the above lemma, we can now prove the following result.

\begin{proposition}\label{prop:Gamma-eff-res-fin}
The group $\Gamma_0$ is $\EFRF^{+}$ and recursively presentable.
\end{proposition}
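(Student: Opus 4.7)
The plan is to construct directly the Turing machine required by Definition~\ref{def:eff-res-fin}, using Lemma~\ref{lem:effectiveness-via-stabilizer-quotients} together with the fact that the action of $\Gamma_0$ on any finite level of $\T_X$ can be effectively computed. The key observation is that for each $n \in \N_0$, the projection $\pi_n \colon \Gamma_0 \to \Sym(X^n)$ from the level-$n$ action is uniformly computable on the finite generating set $\Sigma_0$: the recursive definition of $\widetilde{h}^{[0]}$ expresses its sections at vertices of depth $k$ in terms of iterated applications of $\varphi_k$ and $\psi_k$, and by Lemma~\ref{lem:computable-chain-with-Y} the sequences $(X_n)$, $(\varphi_n)$, $(\psi_n)$ are all computable. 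So a Turing machine taking $n$ as input can produce, for each generator of $\Gamma_0$, an explicit description of its action as a permutation of the finite set $X^n = X_1 \times \cdots \times X_n$, which is exactly the output format described in Section~\ref{sec:output-formats}.

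With this in hand, the Turing machine witnessing $\EFRF^{+}$ proceeds as follows. On input $w \in \Sigma_0^\ast$, set $\ell = |w|$, compute the permutations $\pi_{2\ell}(g) \in \Sym(X^{2\ell})$ for each generator $g \in \Sigma_0$, and multiply them together to obtain $\pi_{2\ell}(w)$. If $\pi_{2\ell}(w) \neq \id$, halt and output the list $(\pi_{2\ell}(g))_{g \in \Sigma_0}$ as the description of a homomorphism onto a finite subgroup of $\Sym(X^{2\ell})$. If $\pi_{2\ell}(w) = \id$, enter an infinite loop. Correctness is immediate from Lemma~\ref{lem:effectiveness-via-stabilizer-quotients}: the condition $\pi_{2\ell}(w) = \id$ is equivalent to $\overline{w} \in \St_{\Gamma_0}(2\ell)$, which (since $|w| = \ell$) is equivalent to $\overline{w} = 1$. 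Hence the machine halts if and only if $\overline{w} \neq 1$, and when it halts it produces a homomorphism $\Gamma_0 \to \Sym(X^{2\ell})$ with $\pi_{2\ell}(\overline{w}) \neq \id$, verifying conditions (E1) and (E2).

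For recursive presentability, observe that the same computation in fact decides the word problem of $\Gamma_0$: on input $w$, compute $\pi_{2|w|}(w)$ and answer "trivial" or "nontrivial" accordingly. In particular, the set of words over $\Sigma_0$ representing the identity is recursively enumerable, so $\Gamma_0$ is recursively presented. The only real point to verify is the uniform computability of $\pi_n$ on generators, which is a routine unfolding of the recursive definition of $\widetilde{\cdot}^{[0]}$ down to depth $n$ using the computable data supplied by Lemma~\ref{lem:computable-chain-with-Y}; no genuine obstacle appears, since all the nontrivial content — namely that separating $\overline{w}$ from the identity can be achieved by a bounded finite quotient whose size depends computably on $|w|$ — has already been packaged into Lemma~\ref{lem:effectiveness-via-stabilizer-quotients}.
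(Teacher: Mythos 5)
Your proposal is correct and follows essentially the same route as the paper: both reduce the matter to computing the level-$2\ell$ action $\pi_{2\ell}$ on the generators $\Sigma_0$, both invoke Lemma~\ref{lem:effectiveness-via-stabilizer-quotients} to conclude that $\pi_{2\ell}(w)=\id$ if and only if $\overline{w}=1$ for $|w|=\ell$, and both then note that this simultaneously gives $\EFRF^{+}$ and (by solvability of the word problem) recursive presentability. The only difference is that you assert the uniform computability of $\pi_n$ on generators as a "routine unfolding," whereas the paper spells out the explicit formula for $\gamma(v)$ in terms of the maximal $x$-prefix of $v$ and the computable maps $\varphi_{k+2},\psi_{k+2}$; your claim is nonetheless justified by Lemma~\ref{lem:computable-chain-with-Y} and the recursive definition of $\widetilde{\cdot}^{[0]}$.
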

\begin{proof}
From Lemma~\ref{lem:effectiveness-via-stabilizer-quotients} we know that a word $w$ of length $\ell$ over $\Sigma_{0}$ represents a non-trivial element in $\Gamma_0$ if and only if $\pi_{2 \ell}(w) \neq \id$, where
\[
\pi_{2 \ell} \colon \Gamma_0 \rightarrow \Sym(X_1 \times \ldots \times X_{2\ell})
\]
denotes the action of $\Gamma_0$ on the $2\ell$-level of the tree $\T_X^{[0]}$.
Regarding this, it suffices to show that there exists a Turing machine $M$ that takes as input a word $w \in \Sigma_{0}^{\ast}$ and returns the homomorphism $\pi_{2 \ell}$, where $\ell$ is the word length of $w$.
									   
Recall that to return $\pi_{2 \ell}$ it suffices to encode the multiplication table of $\Sym(X_1 \times \ldots \times X_{2\ell})$ along with the elements $\pi_{2 \ell}(\gamma)$ for each $\gamma \in \Sigma_{0}$ as the output of a Turing machine.
To see that there is a Turing machine doing the latter, recall from~\eqref{eq:computable-alphabets} that the sequence $X = (X_{\ell})_{\ell \in \N}$ is computable.
Hence the sequence $(X^{2\ell})_{\ell \in \N}$, where $X^{2\ell} = X_1 \times \ldots \times X_{2\ell}$, is computable as well, which in turn implies that the sequence of groups $(\Sym(X^{2\ell}))_{\ell \in \N}$ is computable.
Thus it remains to show that for each generator
\[
\gamma \in \Sigma_0 = ((\widetilde{S}^{[0]} \times \widetilde{A}^{[0]}) \cup B_{0}) \setminus \{1\},
\]
the assignment $\ell \mapsto \pi_{2\ell}(\gamma)$ is computable.
To achieve this, it suffices to show that there is a Turing machine that takes as input a number $\ell \in \N$ and a word
\[
v = a_{i_1} \ldots a_{i_{2\ell}} \in X_1 \times \ldots \times X_{2\ell}
\]
and returns the word $\gamma(v)$.
In the case where $\gamma = \widehat{\tau} \in B_{0} = \widehat{\Alt(X_{1})}$
we have
\[
\widehat{\tau}(v) = \tau(a_{i_1}) a_{i_2} \ldots a_{i_{2\ell}}.
\]
To describe $\gamma(v)$ in the case where $\gamma = (\widetilde{s}^{[0]},\widetilde{\sigma}^{[0]}) \in \widetilde{S}^{[0]} \times \widetilde{A}^{[0]}$, we consider the maximal $k \in \N_0$ with the property that $a_{i_k} = x_k$.
In that case there is a word $u \in X_{k+3} \times \ldots \times X_{2\ell}$ with
\[
v = x_1 \ldots x_{k} a_{i_{k+1}} a_{i_{k+2}} u
\]
and we have
\[
(\widetilde{s}^{[0]},\widetilde{\sigma}^{[0]})(v) =
\begin{cases}
a_{i_{1}} \ldots a_{i_{k+1}} \varphi_{k+2}(s)(a_{i_{k+2}}) u &\text{if } k \leq 2\ell-2 \text{ and } a_{i_{k+1}} = y_{k+1}\\
a_{i_{1}} \ldots a_{i_{k+1}} \psi_{k+2}(\sigma)(a_{i_{k+2}}) u &\text{if } k \leq 2\ell-2 \text{ and } a_{i_{k+1}} = z_{k+1} \\
v &\text{otherwise.}
\end{cases}
\]
Since the sequences $(\varphi_k)_{k \in \N}$ and $(\psi_k)_{k \in \N}$ are computable by~\eqref{eq:computable-phi-and-psi}, we deduce from the above descriptions of $\gamma(v)$ that the assignment $\ell \mapsto \pi_{2\ell}(\gamma)$ is computable for each $\gamma \in \Sigma_0$.
To complete the proof, it remains to note that the Turing machine that takes a word $w \in \Sigma_{0}^{\ast}$ as input and returns the homomorphism $\pi_{2 \ell}$ can check whether $w$ represents the trivial word in $\Gamma_0$ by evaluating $\pi_{2 \ell}(w)$.
Since this solves the word problem in $\Gamma_0$, it follows that $\Gamma_0$ is recursively presented, which completes the proof.
\end{proof}

We have now all ingredients to deduce Theorem~\ref{thm:main-embedding-intro} from the introduction.

\EmbeddingTheorem*

\begin{proof}
From Theorem~\ref{thm:fratini-embedding}, we know that the map
\[
\iota \colon G \rightarrow \Gamma_0,\ g \mapsto \widetilde{g}^{[0]}
\]
defines a Frattini embedding.
In the case where $G$ is recursively presented and $\EFRF^{+}$ we know from Proposition~\ref{prop:Gamma-eff-res-fin} that $\Gamma_0$ is also recursively presented and $\EFRF^{+}$.
\end{proof}

We can now prove \cref{thm:main-existence-branch} from \cref{thm:main-embedding-intro} as follows.

\MainTheorem*

\begin{proof}
By a result of Miller~\cite[Theorem~9 on p.~31]{Miller1971} there exists a finitely presented residually finite group $G$ with solvable word problem and unsolvable conjugacy problem.
Since finitely presented residually finite groups are $\EFRF^{+}$, we can apply Theorem~\ref{thm:main-embedding-intro} to deduce that there exists a finitely generated recursively presented branch group $\Gamma$ with $\EFRF^{+}$ and a Frattini embedding $\iota \colon G \rightarrow \Gamma$.
As a consequence, the unsolvability of the conjugacy problem of $G$ passes to $\Gamma$ via the Frattini embedding, while $\Gamma$ has a solvable word problem since it is recursively presented and $\EFRF^{+}$.
\end{proof}

\bibliographystyle{amsplain}
\bibliography{literatur}

\end{document}